\newtheorem*{thm}{Theorem}
\newtheorem*{proposition}{Proposition}
\newtheorem{corollary}{Corollary}
\newtheorem{lemma}{Lemma}
\theoremstyle{definition}
\theoremstyle{remark}
\DeclareMathOperator{\ei}{Ei}
\begin{document}

\title[]{On the logarithmic energy of points on {\Large $\mathbb{S}^2$}}
\keywords{Green's function, Logarithmic energy, Smale's 7th problem.}
\subjclass[2010]{31B10, 35K05, 49Q20, 52C35.} 
\thanks{S.S. is supported by the NSF (DMS-2123224) and the Alfred P. Sloan Foundation.}

\author[]{Stefan Steinerberger}
\address{Department of Mathematics, University of Washington, Seattle, WA 98195, USA}
\email{steinerb@uw.edu}

\begin{abstract}
We revisit a classical question: how large is the minimal logarithmic energy of $n$ points on $\mathbb{S}^2$
$$ \mathcal{E}_{\log}(n) = \min_{x_1, \dots, x_n \in \mathbb{S}^2} \quad \sum_{i,j =1 \atop i \neq j}^{n}{ \log{\frac{1}{\|x_i-x_j\|}} }?$$
 B\'etermin \& Sandier (building on work of Sandier \& Serfaty)
showed that
$$ \mathcal{E}_{\log}(n) = \left( \frac{1}{2} - \log{2} \right)n^2 - \frac{n \log{n}}{2} + c_{\log} \cdot n + o(n),$$
where the constant $c_{\log}$ is characterized by a certain renormalized minimization problem. Brauchart, Hardin \& Saff conjectured a closed form expression for $c_{\log}$ ($\sim -0.05$) assuming
analytic continuation.
We describe a simple renormalization approach that results in a purely local problem involving superpositions of Gaussians. In particular, this reaffirms a recent result of Petrache-Serfaty about implications of the Cohn-Kumar conjecture. We also improve the lower bound from $c_{\log} \geq -0.223$ to $c_{\log} \geq -0.095$.

\end{abstract}

\maketitle

\section{Introduction}
We revisit the classical problem of trying to understand the minimal logarithmic energy of a set of points $\left\{x_1, \dots, x_n\right\} \subset \mathbb{S}^2$. This quantity
is defined by 
$$ \mathcal{E}_{\log}(n) = \min_{x_1, \dots, x_n \in \mathbb{S}^2} \quad \sum_{i,j=1 \atop i \neq j}^{n}{ \log{\frac{1}{\|x_i-x_j\|}} },$$
where $\mathbb{S}^2 \subset \mathbb{R}^3$ is the sphere of radius 1 and $\| \cdot \|$ is the Euclidean distance in $\mathbb{R}^3$. A standard potential-theoretic approach
would be to assume that the best case is where the points are fairly evenly distributed over the sphere and that in this case 
$$ \sum_{i,j=1 \atop i \neq j}^n \log{\frac{1}{\|x-y\|}} \sim \frac{n^2}{|\mathbb{S}^2|^2} \int_{\mathbb{S}^2} \int_{\mathbb{S}^2} \log \frac{1}{\|x-y\|} dx dy =  \left( \frac{1}{2} - \log{2} \right)n^2.$$
This is indeed the correct leading order. As for refined estimates, it is known that
   $$ -\frac{n}{2}  \log{n} + c_1 n \leq \mathcal{E}_{\log}  - \left( \frac{1}{2} - \log{2} \right)n^2 \leq -\frac{n}{2}  \log{n} + c_2 n,$$
   where the lower bound follows from a result of Elkies (see Baker \cite{baker} or Lang \cite{lang}) with another proof given by Wagner \cite{wagner}. The upper bound was established, with explicit constants, by Rakhmanov, Saff \& Zhou \cite{rakh}, see \cite{ali, arm} for other examples.  B\'etermin \& Sandier \cite{betermin} adapted a renormalization scheme of Sandier \& Serfaty \cite{sandier} to prove that there indeed exists, asymptotically, a constant in front of the linear term, i.e.
   $$\mathcal{E}_{\log}(n) = \left( \frac{1}{2} - \log{2} \right)n^2 - \frac{n \log{n}}{2} +c_{\log} \cdot n +o(n).$$
   The best known lower bound $c_{\log} \geq -0.223$ is due to Dubickas \cite{dub}. As for the upper bound,  a very nearly optimal explicit construction is due to  Beltran \& Etayo \cite{beltri}. 
    Betermin \& Sandier \cite{sandier} showed that $c_{\log} \leq c_{\tiny \mbox{BHS}}$, where $c_{\tiny \mbox{BHS}}$ denotes a constant earlier introduced by Brauchart, Hardin \& Saff \cite{brauchart}: they interpret the logarithmic energy as the derivative of the $s-$Riesz energy and suggest the following \begin{quote}
   \textbf{Conjecture} (Brauchart, Hardin \& Saff \cite{brauchart})\textbf{.} $c_{\log}$ is equal to $c_{\tiny \mbox{BHS}}$ where
$$  c_{\tiny \mbox{BHS}} =  2\log{2} + \frac{1}{2} \log\frac{2}{3} + 3 \log \frac{\sqrt{\pi}}{\Gamma(1/3)}  \sim -0.055605\dots$$
\end{quote}
We observe that the question has also received attention as being related to Problem 7 on Smale's list \cite{smale}: is it possible to find, with computational cost at most polynomial in $n$, points $\left\{x_1, \dots, x_n\right\} \subset \mathbb{S}^2$ such that
$$ \sum_{i,j=1 \atop i \neq j}^n{ \log{\frac{1}{\|x_i-x_j\|}}} \leq \mathcal{E}_{\log}(n)  + C \log{n}?$$
There is a nice survey by Beltran \cite{beltran00} and, more recently, a detailed description in the book by Borodachov, Hardin \& Saff \cite[Chapter 7]{bor}. We also refer to a very detailed recent description by Hardin, Michaels \& Saff \cite{hardinm}  who survey a large number of popular point configurations on $\mathbb{S}^2$.

 \section{Results}
 \subsection{The Main Result.} The purpose of our paper is to provide a simple renormalization procedure that is completely explicit. Its main ingredient is the heat kernel $e^{t\Delta} \delta_x$ applied to a Dirac measure located in a point $x \in \mathbb{S}^2$. While this object is perhaps somewhat abstract on more general manifolds, there are explicit formulas on $\mathbb{S}^2$ (see e.g. \cite{nagase} and references therein). Moreover, we will only use this object for very small values of $t$ (roughly $t \sim n^{-1}$ as $n \rightarrow \infty$). In that regime, we have Varadhan's short time asymptotics \cite{vara1, vara2} suggesting that if $x, y \in \mathbb{S}^2$ are two points that are very close (and only those will matter), then
 $$ \left[e^{t\Delta} \delta_{x_k}\right](x) \sim \frac{1}{4 \pi t} \exp \left(-\frac{\|x - x_k\|^2}{4t}\right).$$
Therefore, we can think of this object as `basically' a Gaussian when $t$ is small. Moreover, if $t \sim n^{-1}$, then this Gaussian is spaced out at scale $\sim n^{-1/2}$ (which is also the nearest neighbor distance of an extremal configuration: the Voronoi cells should be roughly round with each cell having roughly the same volume $\sim 1/n$). 
 \begin{thm}[Main Result] For all sets of distinct points $\left\{x_1, \dots, x_n\right\} \subset \mathbb{S}^2$ and all $t > 0$, the expression
\begin{align*} \frac{1}{2\pi} \sum_{k, \ell =1 \atop k \neq \ell}^n \log \frac{1}{\|x_k - x_{\ell}\|}  -  \int_{0}^t\sum_{k, \ell =1 \atop k \neq \ell}^n  \left( e^{s \Delta} \delta_{x_{\ell}} \right)(x_k) ds - \left\| e^{(t/2) \Delta} \left( \sum_{k=1}^{n} \delta_{x_k} \right) \right\|_{\dot H^{-1}}^2 
\end{align*}
is \underline{\emph{independent}} of the set of points.
\end{thm}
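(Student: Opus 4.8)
The plan is to reduce the whole statement to two classical facts about the Laplace--Beltrami operator on $\mathbb{S}^2$ and then to observe that, once they are substituted, every configuration-dependent quantity cancels. Fix the spectral data: let $0=\lambda_0<\lambda_1\le\lambda_2\le\cdots$ be the eigenvalues of $-\Delta$ on $\mathbb{S}^2$ with $L^2$-orthonormal eigenfunctions $Y_0\equiv(4\pi)^{-1/2},Y_1,Y_2,\dots$, and write $p_s(x,y):=\big(e^{s\Delta}\delta_y\big)(x)=\sum_{j\ge 0}e^{-\lambda_j s}Y_j(x)Y_j(y)$ for the heat kernel. The two inputs I would use are: (i) the Green's function identity on $\mathbb{S}^2$,
\begin{align*}
G(x,y):=\int_0^\infty\Big(p_s(x,y)-\tfrac{1}{4\pi}\Big)\,ds=-\frac{1}{2\pi}\log\|x-y\|+\gamma
\end{align*}
for an explicit absolute constant $\gamma$ (here $\|\cdot\|$ is the chordal distance; the integral converges because of the spectral gap $\lambda_1=2>0$, and the identification up to a constant is the classical fact that on this homogeneous surface the Green's function has the Euclidean logarithmic singularity and is $O(3)$-invariant); and (ii) for any finite measure $\nu$,
\begin{align*}
\Big\|e^{(t/2)\Delta}\nu\Big\|_{\dot H^{-1}}^2=\int_t^\infty\Big(\langle\nu,e^{s\Delta}\nu\rangle-\tfrac{1}{4\pi}\,\nu(\mathbb{S}^2)^2\Big)\,ds,
\end{align*}
which is immediate from $\|f\|_{\dot H^{-1}}^2=\sum_{j\ge 1}\lambda_j^{-1}|\widehat f(j)|^2$, from $\widehat\nu(0)^2=(4\pi)^{-1}\nu(\mathbb{S}^2)^2$, and from $\lambda_j^{-1}e^{-\lambda_j t}=\int_t^\infty e^{-\lambda_j s}\,ds$.

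Next I apply (i) to each pair $k\ne\ell$ and sum. Writing $\mu=\sum_{k=1}^n\delta_{x_k}$ and splitting $\int_0^\infty=\int_0^t+\int_t^\infty$,
\begin{align*}
\frac{1}{2\pi}\sum_{k\ne\ell}\log\frac{1}{\|x_k-x_\ell\|}=\int_0^t\sum_{k\ne\ell}\Big(p_s(x_k,x_\ell)-\tfrac{1}{4\pi}\Big)\,ds+\int_t^\infty\sum_{k\ne\ell}\Big(p_s(x_k,x_\ell)-\tfrac{1}{4\pi}\Big)\,ds-\gamma\,n(n-1).
\end{align*}
In the tail integral I complete the double sum and subtract the diagonal: since $\sum_{k,\ell}p_s(x_k,x_\ell)=\langle\mu,e^{s\Delta}\mu\rangle$ and, by homogeneity of $\mathbb{S}^2$, $p_s(x_k,x_k)=p_s(o,o)$ is independent of the point, applying (ii) with $\nu=\mu$ and $\mu(\mathbb{S}^2)=n$ gives
\begin{align*}
\int_t^\infty\sum_{k\ne\ell}\Big(p_s(x_k,x_\ell)-\tfrac{1}{4\pi}\Big)\,ds=\Big\|e^{(t/2)\Delta}\mu\Big\|_{\dot H^{-1}}^2-n\int_t^\infty\Big(p_s(o,o)-\tfrac{1}{4\pi}\Big)\,ds.
\end{align*}
Substituting this back, and writing $\int_0^t\sum_{k\ne\ell}(p_s-\tfrac1{4\pi})\,ds=\int_0^t\sum_{k\ne\ell}p_s\,ds-\tfrac{n(n-1)t}{4\pi}$, the two copies of $\int_0^t\sum_{k\ne\ell}p_s\,ds$ and the two copies of $\|e^{(t/2)\Delta}\mu\|_{\dot H^{-1}}^2$ cancel, and what remains is
\begin{align*}
&\frac{1}{2\pi}\sum_{k\ne\ell}\log\frac{1}{\|x_k-x_\ell\|}-\int_0^t\sum_{k\ne\ell}p_s(x_k,x_\ell)\,ds-\Big\|e^{(t/2)\Delta}\mu\Big\|_{\dot H^{-1}}^2 \\
&\qquad=-\frac{n(n-1)t}{4\pi}-n\int_t^\infty\Big(p_s(o,o)-\tfrac{1}{4\pi}\Big)\,ds-\gamma\,n(n-1),
\end{align*}
which manifestly depends only on $n$ and $t$, proving the claim.

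The routine points are convergence and the interchange of $\sum$ with $\int$. For a fixed configuration of \emph{distinct} points, $p_s(x_k,x_\ell)$ stays bounded (indeed decays like $(4\pi s)^{-1}e^{-\|x_k-x_\ell\|^2/4s+O(1)}$ by Varadhan's asymptotics) as $s\to 0^+$ when $k\ne\ell$, so the $\int_0^t$ term is finite; and every $\int_t^\infty$ integrand is $O(e^{-2s})$ by the spectral gap, so all these integrals converge absolutely and Fubini applies. The only genuinely external ingredient is the precise form of the Green's function in (i) — it is exactly the step that converts the scale-free logarithmic kernel into the heat semigroup, so this is where I would be most careful. If one prefers a self-contained route, (i) can be checked directly by verifying, e.g. in geodesic polar coordinates around $y$, that $-\Delta_x\big(-\tfrac{1}{2\pi}\log\|x-y\|\big)=\delta_y-\tfrac{1}{4\pi}$ on $\mathbb{S}^2$, after which (i) follows from the defining property of $G$ and the normalization $\int_{\mathbb{S}^2}G(x,y)\,dx=0$ (which also pins down $\gamma=\tfrac{1}{2\pi}(\log 2-\tfrac12)$).
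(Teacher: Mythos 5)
Your argument is correct, and it is in substance the same identity as the paper's: the paper opens its proof section by remarking that the result is "a fairly simple consequence of $\frac{1}{\lambda}=\int_0^t e^{-\lambda s}\,ds+\frac{e^{-\lambda t}}{\lambda}$," and your proof is precisely a careful execution of that spectral shortcut. The difference is one of packaging. You take the Green's function as the time-integrated heat kernel, $G(x,y)=\int_0^\infty\bigl(p_s(x,y)-\tfrac{1}{4\pi}\bigr)\,ds$, split the integral at $s=t$, and recognize the tail as $\|e^{(t/2)\Delta}\mu\|_{\dot H^{-1}}^2$ after restoring the diagonal (which is point-independent by homogeneity). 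The paper instead starts from the $\dot H^{-1}$ norm, expands it via the Green's function pairing, and recovers the $\int_0^t$ piece by differentiating $\int_{\mathbb{S}^2}G(x_k,y)\,e^{t\Delta}\delta_{x_\ell}(y)\,dy$ in $t$ and applying the fundamental theorem of calculus together with $\Delta_yG(x_k,y)=\tfrac{1}{4\pi}-\delta_{x_k}$ --- that FTC step is exactly your splitting of $\int_0^\infty$ at $t$, read in the opposite direction, and the paper's route avoids manipulating infinite spectral sums pointwise. Your version is arguably cleaner and makes the role of the spectral gap and the diagonal term more transparent; its only external input is the pointwise identification of $\sum_{j\ge1}\lambda_j^{-1}Y_j(x)Y_j(y)$ with $\tfrac{1}{2\pi}\log\tfrac{1}{\|x-y\|}+c_2$, which you correctly flag and which the paper also takes from the literature. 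Your closed form for the configuration-independent constant, $-\tfrac{n(n-1)t}{4\pi}-n\int_t^\infty\bigl(p_s(o,o)-\tfrac{1}{4\pi}\bigr)\,ds-c_2\,n(n-1)$, agrees with the paper's expression for $X$ after unwinding $\int_{\mathbb{S}^2}G(z,y)e^{t\Delta}\delta_z(y)\,dy=\int_t^\infty\bigl(p_s(o,o)-\tfrac{1}{4\pi}\bigr)\,ds$. One cosmetic caution: your $\gamma$ in step (i) is the Green's function normalization constant $c_2=\tfrac{1}{2\pi}(\log 2-\tfrac12)$, not the Euler--Mascheroni constant that the paper denotes by the same letter elsewhere; rename it to avoid collision.
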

This means that the problems of minimizing
$$ \min_{x_1, \dots, x_n \in \mathbb{S}^2} \sum_{k, \ell =1 \atop k \neq \ell}^n \log \frac{1}{\|x_k - x_{\ell}\|}$$
and, for any fixed $t>0$,
$$ \min_{x_1, \dots, x_n \in \mathbb{S}^2}  \int_{0}^t \sum_{k, \ell =1 \atop k \neq \ell}^n \left( e^{s \Delta} \delta_{x_{\ell}} \right)(x_k) ds+ \left\| e^{(t/2) \Delta} \left( \sum_{k=1}^{n} \delta_{x_k} \right) \right\|_{\dot H^{-1}}^2  $$
are completely equivalent: their energy differs by a universal constant, depending only on $n$ and $t$, for which we can give fairly precise estimates as $t \rightarrow 0$ (see \S 4.1).
We are completely free to choose $t$ any way we see fit and we will argue in \S 2.2 that $t = c/n$ or maybe $t = (\log{n})^c/n$ for some constant $c \gg 1$ is a particularly natural choice. 
The result also holds on $\mathbb{S}^d$ and certain compact rank one symmetric spaces (basically, what we need is that the heat kernel `looks the same' in every point). 

\subsection{Simplifying the Problem.} We will now analyze the first term in our functional for $s$ fixed, i.e. the problem 
$$\min_{x_1, \dots, x_n \in \mathbb{S}^2} \sum_{k, \ell=1 \atop k \neq \ell}^{n}   \int_{\mathbb{S}^2}\left[ e^{s\Delta} \delta_{x_k} \right](x_{\ell}).$$
If $ s \sim n^{-1}$, then this is, in some sense, a much simpler problem: it is essentially Gaussian interaction at the scale of nearest neighbor distances. This energy functional already arose in a series of other settings \cite{lu, stein0, stein1}. The specific problem that governs the behavior of the minimal logarithmic energy at the linear scale is:
\begin{quote}
\textbf{Problem 1.} Let $0 < c < \infty$ and consider for $\left\{x_1, \dots, x_n\right\} \subset \mathbb{S}^2$
$$ \min_{x_1, \dots, x_n \in \mathbb{S}^2} \sum_{i,j =1}^{n}{ \exp\left( - c \cdot n \cdot \|x_i - x_j\|^2\right)} $$
Prove that minimizing configurations behave locally like the hexagonal lattice (on most of the domain as $n \rightarrow \infty$).
\end{quote}

It is probably not important that the points are on $\mathbb{S}^2$, the statement presumably does not depend much on the underlying manifold. 
This is a classical crystallization conjecture (see e.g. \cite{blanc}). There are many such conjectures for a wide variety of kernels; there is a lot of numerical evidence for most of them (in particular, it is likely that this problem has already been stated many times in the literature). 
The relevance of this problem is captured in the following `Meta'-Theorem. We cannot state it as a Theorem because it depends on how accurately one solves Problem 1.\\

\textbf{`Meta'-Theorem.}  \textit{ A sufficiently quantitative solution of \emph{Problem 1} implies the conjecture of Brauchart, Hardin \& Saff \cite{brauchart}
$$  c_{\log} =  2\log{2} + \frac{1}{2} \log\frac{2}{3} + 3 \log \frac{\sqrt{\pi}}{\Gamma(1/3)}  \sim -0.055605\dots$$
It would also imply similar such statements on other compact rank one symmetric spaces and methods of obtaining the constant.}

\begin{proof}[`Meta'-Proof.] We want to solve, for some $t > 0$,
$$\min_{x_1, \dots, x_n \in \mathbb{S}^2}    \int_{0}^t \sum_{k, \ell =1  \atop k \neq \ell}^n \left( e^{s \Delta} \delta_{x_{\ell}} \right)(x_k) ds+ \left\| e^{(t/2) \Delta} \left( \sum_{k=1}^{n} \delta_{x_k} \right) \right\|_{\dot H^{-1}}^2.$$
By picking $t = c/n$ and using the hypothetical solution to Problem 1, we see that the first term is minimized by a structure that is locally hexagonal. If the points behave locally like a hexagonal lattice and $t = c/n$, then, as $n \rightarrow \infty,$
$$ \left\| e^{(c/(2n)) \Delta} \left( \sum_{k=1}^{n} \delta_{x_k} \right) \right\|_{\dot H^{-1}}^2  \leq f(c)   \int_{0}^{c/n} \sum_{k, \ell = 1 \atop k \neq \ell}^n \left( e^{s \Delta} \delta_{x_{\ell}} \right)(x_k) ds $$
where $f(c) \rightarrow 0$ as $c \rightarrow \infty$.
We give more details in \S 4.4.
\end{proof}

Summarizing, we have the following chain of arguments.
\begin{enumerate}
\item Minimizing logarithmic energy is the same as minimizing, for any $t > 0$,
$$\min_{x_1, \dots, x_n \in \mathbb{S}^2}   \int_{0}^t \sum_{k, \ell =1 \atop k \neq \ell}^n \left( e^{s \Delta} \delta_{x_{\ell}} \right)(x_k) ds+ \left\| e^{(t/2) \Delta} \left( \sum_{k=1}^{n} \delta_{x_k} \right) \right\|_{\dot H^{-1}}^2  $$
\item The first term is an integral over interactions of Gaussian-type. For each $s>0$, we expect the minimizing configuration to be approximately hexagonal and thus also for superpositions and for solutions of
$$    \min_{x_1, \dots, x_n \in \mathbb{S}^2} \int_{0}^t \sum_{k, \ell =1 \atop k \neq \ell}^n \left( e^{s \Delta} \delta_{x_{\ell}} \right)(x_k) ds  $$
which should also be locally hexagonal.
\item If that is the case and we choose $t = c/n$ with $c \gg 1$, then this first term is bigger than the second term. In particular, we could estimate the logarithmic energy by only taking the first term and letting $c \rightarrow \infty$ (regarding order of limits, we would consider a fixed $c$ and then let $n\rightarrow \infty$ and then afterwards remark that we could pick $c$ larger and larger).
\item This would result in the constant predicted by Brauchart, Hardin \& Saff (see \S 5) and establish the linear asymptotics for the logarithmic energy.
\end{enumerate}

This line of reasoning is naturally related to a recent paper of Petrache \& Serfaty \cite{pet}: they showed that the Cohn-Kumar \cite{cohn} conjecture in $d=2$ would imply that the hexagonal lattice is optimal with respect to the renormalized energy (see Sandier \& Serfaty \cite{sandier}) and thus, via the work of B\'etermin \& Sandier \cite{betermin}, would lead to $c_{\log}$ coinciding with the value predicted by Brauchart, Hardin \& Saff \cite{brauchart}. We should thus think of the Gaussian as a basic building block for which these types of crystallization conjectures might be the easiest (though this is hard to say for certain, the Gaussian is certainly a particularly nice function). Establishing such statements for the Gaussian would have several other implications: step (2), writing logarithmic energy as a superposition of Gaussians, is more flexible and would also allow to represent other functions (we refer to Petrache \& Serfaty \cite{pet} for more details).

\subsection{An improved lower bound}  A byproduct of our approach is a new lower bound on $c_{\log}$. We recall that, due to B\'etermin \& Sandier \cite{betermin}, we have the asymptotic expansion
$$\mathcal{E}_{\log}(n) = \left( \frac{1}{2} - \log{2} \right)n^2 - \frac{n \log{n}}{2} +c_{\log} \cdot n +o(n),$$
where $c_{\log}$ is known to satisfy
$$ - 0.223 \leq c_{\log} \leq c_{\tiny \mbox{BHS}} \sim  -0.055605\dots$$
with the lower bound is due to Dubickas \cite{dub} and the upper bound is due to B\'etermin \& Sandier \cite{betermin}. It is widely assumed that the upper bound is sharp.
\begin{corollary} We have
$$ c_{\log} \geq \frac{\log{4} - 1 - \gamma}{2} \sim -0.0954\dots,$$
where $\gamma \sim 0.577\dots$ is the Euler-Mascheroni constant.
\end{corollary}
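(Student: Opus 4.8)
The plan is to feed into the Main Result the two trivial facts that both subtracted quantities are nonnegative. The heat kernel $p_s(x,y):=(e^{s\Delta}\delta_y)(x)$ is strictly positive, so $\int_0^t\sum_{k\neq\ell}p_s(x_k,x_\ell)\,ds\geq 0$, and the third term is a squared $\dot H^{-1}$-norm, hence $\geq 0$. Since by the Main Result the whole expression equals a constant $K(n,t)$ depending only on $n$ and $t$, every configuration satisfies
$$\frac{1}{2\pi}\sum_{k\neq\ell}\log\frac{1}{\|x_k-x_\ell\|} = K(n,t) + (\geq 0) + (\geq 0) \geq K(n,t),$$
so $\mathcal{E}_{\log}(n)\geq 2\pi K(n,t)$ for \emph{every} $t>0$. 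Everything then reduces to an explicit formula for $K(n,t)$ and a good choice of $t$.

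For the formula I would read $K(n,t)$ off the proof of the Main Result (the content of \S 4.1). The heat representation of the Green's function on $\mathbb{S}^2$ is $-\tfrac{1}{2\pi}\log\|x-y\|=\int_0^\infty(p_s(x,y)-\tfrac{1}{4\pi})\,ds+\tfrac{1/2-\log 2}{2\pi}$; summing over $k\neq\ell$, splitting $\int_0^\infty=\int_0^t+\int_t^\infty$, and recognizing $\int_t^\infty\big(\sum_{k,\ell}p_s(x_k,x_\ell)-\tfrac{n^2}{4\pi}\big)\,ds=\|e^{(t/2)\Delta}(\sum_k\delta_{x_k})\|_{\dot H^{-1}}^2$ (whose diagonal $k=\ell$ terms contribute the extra self-energy piece $n\,I(t)$ below), one obtains
$$K(n,t)=\frac{n(n-1)\,(\tfrac12-\log 2)}{2\pi}-\frac{n(n-1)\,t}{4\pi}-n\,I(t),\qquad I(t):=\int_t^\infty\Big(P_s-\frac{1}{4\pi}\Big)\,ds,$$
where $P_s:=p_s(x,x)=\tfrac{1}{4\pi}\sum_{\ell\geq 0}(2\ell+1)e^{-\ell(\ell+1)s}$ is the (position-independent) heat-trace density.

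The heart of the matter is the behaviour of $I(t)$ as $t\to 0$, and here the multiplicity $2\ell+1$ and the eigenvalue $\ell(\ell+1)$ combine pleasantly. Integrating the spectral sum term by term,
$$4\pi\,I(t)=\sum_{\ell\geq 1}\frac{2\ell+1}{\ell(\ell+1)}e^{-\ell(\ell+1)t}=\sum_{\ell\geq 1}\Big(\frac{1}{\ell}+\frac{1}{\ell+1}\Big)e^{-\ell(\ell+1)t}=2\sum_{\ell\geq 1}\frac{e^{-\ell(\ell+1)t}}{\ell}-\sum_{\ell\geq 1}\frac{e^{-\ell(\ell+1)t}}{\ell(\ell+1)}.$$
As $t\to 0$ the last sum tends to $\sum_{\ell\geq 1}\tfrac{1}{\ell(\ell+1)}=1$, while $\sum_{\ell\geq 1}\tfrac{e^{-\ell(\ell+1)t}}{\ell}=\sum_{\ell\geq 1}\tfrac{e^{-\ell^2 t}}{\ell}+O(\sqrt t)=-\tfrac12\log t+\tfrac{\gamma}{2}+o(1)$ by the classical theta estimate — obtainable from the Mellin transform $\int_0^\infty t^{z-1}\sum_{\ell\geq 1}\ell^{-1}e^{-\ell^2 t}\,dt=\Gamma(z)\zeta(2z+1)$, whose only pole to the right of $-1$ is the double pole at $z=0$ contributing exactly $-\tfrac12\log t+\tfrac{\gamma}{2}$. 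Hence $4\pi\,I(t)=-\log t+\gamma-1+O(\sqrt t)$ as $t\to 0$.

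Finally I would take $t=1/n$; one checks — by maximizing the linear-in-$n$ coefficient of $2\pi K(n,\lambda/n)$ over $\lambda$, equivalently via the critical-point condition $P_t=n/(4\pi)$ together with the concavity of $t\mapsto 2\pi K(n,t)$ — that this is the optimal choice. Substituting the expansion of $I(1/n)$ into the formula for $K$ gives
$$2\pi\,K(n,1/n)=\Big(\tfrac12-\log 2\Big)n^2-\frac{n\log n}{2}+\frac{\log 4-1-\gamma}{2}\,n+o(n),$$
where the error absorbs $n\cdot O(n^{-1/2})=o(n)$ from $I(1/n)$. Comparing $\mathcal{E}_{\log}(n)\geq 2\pi K(n,1/n)$ with B\'etermin \& Sandier's expansion $\mathcal{E}_{\log}(n)=(\tfrac12-\log 2)n^2-\tfrac{n\log n}{2}+c_{\log}\,n+o(n)$ and dividing by $n$ yields $c_{\log}\geq\tfrac{\log 4-1-\gamma}{2}$. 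The main obstacle is a matter of care rather than depth: every error term must survive multiplication by $n$ as $o(n)$, and the constant in the theta asymptotic must be exactly right — it is precisely this $\gamma$ that appears in the final bound, so there is no slack there.
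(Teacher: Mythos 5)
Your proof is correct and follows the same skeleton as the paper's: discard the two manifestly nonnegative terms, evaluate the point-independent constant, and choose $t=1/n$. The one substantive difference is how you evaluate that constant, i.e.\ the paper's Lemma~1. The paper computes $\int_{\mathbb{S}^2}G(z,y)\,e^{t\Delta}\delta_z(y)\,dy$ by replacing the spherical heat kernel with the planar Gaussian $\frac{1}{4\pi t}e^{-\|y\|^2/4t}$ and evaluating $\int_{\mathbb{R}^2}\log(1/\|y\|)\,e^{t\Delta}\delta_0$ in closed form via the exponential integral $\ei$, which requires controlling the approximation errors (the $(1+\mathcal{O}(t))$ factor and the area-distortion correction). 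You instead stay exact on the sphere: writing the same quantity as $I(t)=\int_t^\infty(p_s(x,x)-\tfrac{1}{4\pi})\,ds$, expanding in the spectrum $\lambda_\ell=\ell(\ell+1)$ with multiplicity $2\ell+1$, using the partial fraction $\tfrac{2\ell+1}{\ell(\ell+1)}=\tfrac{2}{\ell}-\tfrac{1}{\ell(\ell+1)}$, and extracting $-\tfrac12\log t+\tfrac{\gamma}{2}$ from the double pole of $\Gamma(z)\zeta(2z+1)$ at $z=0$. Both routes give $4\pi I(t)=-\log t+\gamma-1+o(1)$, matching Lemma~1 after noting $\tfrac{\log 2}{2\pi}=\tfrac{\log 4}{4\pi}$, and the rest of your bookkeeping (including the $n\cdot\mathcal{O}(n^{-1/2})=o(n)$ error and the observation that $t=1/n$ is asymptotically the optimal choice, which the paper only motivates heuristically) is sound. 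What your route buys is rigor at no extra cost on $\mathbb{S}^2$ — no heat-kernel approximation to justify — and an explicit error rate $\mathcal{O}(\sqrt t)$; what the paper's route buys is portability to manifolds without an explicit spectrum and consistency with its guiding theme of reducing everything to Euclidean Gaussians. As a bonus, your derivation of $K(n,t)$ from the subordination identity $\frac{1}{\lambda}=\int_0^te^{-\lambda s}ds+\frac{e^{-\lambda t}}{\lambda}$ reproves the Main Result along the way, exactly as the paper's opening remark to \S 3 anticipates.
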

What is nice about the proof is that it fits into the overarching philosophy: we get this particular value by considering $t = 1/n$ and have to deal with two particular objects, those being
$$    \int_{0}^t \sum_{k, \ell =1 \atop k \neq \ell}^n \left( e^{s \Delta} \delta_{x_{\ell}} \right)(x_k) ds \qquad \mbox{and} \qquad \left\| e^{(t/2) \Delta} \left( \sum_{k=1}^{n} \delta_{x_k} \right) \right\|_{\dot H^{-1}}^2.$$
We bound both from below by 0.  This is fairly accurate for the first term when $0 \leq t \ll 1/n$ and is fairly accurate for the second term when $t \gg 1/n$. The point $t=1/n$ represents a nice middle ground between the two and leads to Corollary 1. One would like to choose $t = c/n$ for $c \gg 1$ to be able to comfortably ignore the second term -- then, however, the first term becomes more important and requires a more detailed understanding of the local geometry of optimal configurations.
(Note added in print: in a recent preprint, Lauritsen \cite{laur} showed that an old bound on the jellium energy due to Lieb-Narnhofer \cite{liebn} and Sari-Merlini \cite{sari} can be used to further improves the estimate to $c_{\log} \geq - 0.0569$.)

\subsection{Cubic Jacobi theta function.}
A byproduct of our argument is an alternative expression for the constant $c_{\log}$ governing the linear term under the assumption of optimal configurations being locally hexagonal. Following Borwein \& Borwein \cite{borwein}, we introduce, for $|q| < 1$, the series
$$ L(q) = \sum_{m,n = -\infty}^{\infty} q^{m^2 + mn + n^2}.$$
This is the cubic analogue of the Jacobi theta function and satisfies a number of interesting identities, we refer to \cite{borwein, markus} for more details.
\begin{corollary} We have
$$ c_{\tiny \emph{BHS}} = \lim_{c \rightarrow \infty}  \int_{0}^{c}  \frac{1}{2s} \left(  L\left(e^{-\frac{2\pi}{\sqrt{3} s}} \right) - 1 \right)ds - \frac{c}{2} + \frac{\log{(4c)}}{2} - \frac{\gamma}{2}.$$
\end{corollary}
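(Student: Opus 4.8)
The plan is to recognize the right-hand side as a \emph{regularized lattice energy} of the hexagonal lattice and then to evaluate it through the Epstein zeta function and the first Kronecker limit formula. First I would observe that, for $s>0$,
$$L\!\left(e^{-\frac{2\pi}{\sqrt 3\, s}}\right)=\sum_{m,n\in\mathbb Z}e^{-\frac{2\pi}{\sqrt 3\, s}(m^2+mn+n^2)}=\sum_{v\in\Lambda}e^{-\pi\|v\|^2/s}=:\Theta_\Lambda(1/s),$$
where $\Lambda\subset\mathbb R^2$ is the triangular lattice normalized to covolume $1$ (its Gram form in a reduced basis is $m^2+mn+n^2$, with minimal squared norm $2/\sqrt3$ and fundamental cell area $\tfrac{\sqrt3}{2}\cdot\tfrac{2}{\sqrt3}=1$; this one-line check fixes the normalization). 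Up to the rescaling by the nearest-neighbour distance carried out in \S 4, this $\Theta_\Lambda$ is exactly the heat-kernel interaction $\int_0^{c/n}\sum_{k\neq\ell}(e^{s\Delta}\delta_{x_\ell})(x_k)\,ds$ of a locally hexagonal configuration, which is why it governs the linear term.

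Next I would substitute $\tau=1/s$, which turns the integral into $\tfrac12\int_{1/c}^{\infty}\tau^{-1}(\Theta_\Lambda(\tau)-1)\,d\tau$. Poisson summation (covolume $1$) gives $\Theta_\Lambda(\tau)=\tfrac1\tau\Theta_{\Lambda^{*}}(1/\tau)=\tfrac1\tau+O(e^{-c'/\tau})$ as $\tau\to0^{+}$, so the integrand behaves like $\tau^{-2}-\tau^{-1}$ near $0$; integrating this elementary part from $\tau=1/c$ produces precisely the divergences $\tfrac{c}{2}-\tfrac{\log c}{2}$, and the counterterms $-\tfrac{c}{2}+\tfrac{\log(4c)}{2}=-\tfrac c2+\tfrac{\log c}{2}+\log 2$ in the Corollary are tailored to cancel them, so the limit exists.

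To evaluate the finite part I would pass to the Mellin transform: for $\operatorname{Re}(s)>1$,
$$\int_0^{\infty}\tau^{s-1}\bigl(\Theta_\Lambda(\tau)-1\bigr)\,d\tau=\Gamma(s)\,\pi^{-s}E_\Lambda(s),\qquad E_\Lambda(s)=\sum_{v\in\Lambda\setminus\{0\}}\|v\|^{-2s},$$
where $E_\Lambda$ is the Epstein zeta function of $\Lambda$; it continues meromorphically, is holomorphic at $s=0$ with $E_\Lambda(0)=-1$, and the regularized value of $\tfrac12\int_0^\infty\tau^{-1}(\Theta_\Lambda(\tau)-1)\,d\tau$ with the subtractions above equals the constant term in the Laurent expansion of $\tfrac12\Gamma(s)\pi^{-s}E_\Lambda(s)$ at $s=0$. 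Expanding with $\Gamma(s)=s^{-1}-\gamma+O(s)$ and $\pi^{-s}=1-s\log\pi+O(s^2)$, this constant term is $\tfrac12\bigl(E_\Lambda'(0)+\gamma+\log\pi\bigr)$, so after combining with the $-\tfrac\gamma2$ and the $\log2$ in the Corollary the limit equals $\tfrac12 E_\Lambda'(0)+\tfrac12\log\pi+\log2$. Finally, the first Kronecker limit formula (equivalently, the factorization $E_\Lambda(s)$ into $\zeta$ and $L(s,\chi_{-3})$ and Lerch's formula for $L'(0,\chi_{-3})$) gives $E_\Lambda'(0)=3\log2-\log3+2\log\pi-6\log\Gamma(1/3)$, the $\Gamma(1/3)$ appearing precisely because $\Lambda$ is the CM point of discriminant $-3$; substituting and simplifying the elementary constants collapses everything to $2\log2+\tfrac12\log\tfrac23+3\log\tfrac{\sqrt\pi}{\Gamma(1/3)}=c_{\tiny \mbox{BHS}}$. (Alternatively, one can skip the last two steps by invoking that $c_{\tiny \mbox{BHS}}$ is itself known, from \cite{brauchart} together with the renormalized-energy computations of \cite{sandier,betermin}, to equal exactly this regularized hexagonal energy, reducing the Corollary to a normalization check; the Borwein--Borwein identities for $L$ \cite{borwein,markus} give another route to the same closed form.)

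I expect the main obstacle to be bookkeeping rather than conceptual: carrying the factors of $2$, $\pi$, $\sqrt3$ and $\gamma$ correctly through the change of variables, the Mellin regularization and the Kronecker limit formula so that they collapse to exactly $c_{\tiny \mbox{BHS}}$, and --- more delicately --- pinning down the covolume-$1$ normalization of $\Lambda$ so that it agrees with the heat-kernel scaling of \S 4 and so that the linear-in-$n$ contribution is not conflated with the universal constant $C(n,t)$ of the Main Theorem. The one genuinely analytic ingredient, the value of $\eta$ (equivalently $E_\Lambda'(0)$) at the hexagonal point, is classical.
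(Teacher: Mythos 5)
Your proposal is correct, but it proves the identity by a genuinely different route than the paper. The paper never evaluates the limit analytically: it derives the integral expression as the renormalized energy of a locally hexagonal configuration (via the main heat-kernel identity, the density computation $\lambda^2 = 8\pi/(\sqrt{3}\,n)$, and Lemma 1), and then identifies the resulting constant with $c_{\mathrm{BHS}}$ by observing that Brauchart--Hardin--Saff obtained their value under exactly the same hexagonal assumption --- essentially your parenthetical ``alternative''. You instead verify the identity directly: after the substitution $\tau = 1/s$ and the covolume-one normalization (which you correctly pin down, $\|v\|^2 = \tfrac{2}{\sqrt{3}}(m^2+mn+n^2)$), the counterterms $-\tfrac{c}{2} + \tfrac{\log(4c)}{2}$ are precisely the subtractions that make the Mellin regularization at $s=0$ well defined, the finite part is $\tfrac12\bigl(E_\Lambda'(0)+\gamma+\log\pi\bigr)$, and the Chowla--Selberg/Kronecker value $E_\Lambda'(0) = 3\log 2 - \log 3 + 2\log\pi - 6\log\Gamma(1/3)$ collapses the whole expression to $\tfrac52\log 2 - \tfrac12\log 3 + \tfrac32\log\pi - 3\log\Gamma(1/3) = c_{\mathrm{BHS}}$; I checked the bookkeeping (including $E_\Lambda(0)=-1$ from $6\zeta(0)L(0,\chi_{-3})$ and the cancellation of the $\gamma$'s) and it is consistent. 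What your route buys is a self-contained, rigorous proof of the stated numerical identity, independent of how $c_{\mathrm{BHS}}$ was originally obtained; what the paper's route buys is the interpretation of the formula as the renormalized hexagonal energy produced by the Theorem, which is the point of that section. Two cosmetic remarks: the clause tying $\Theta_\Lambda$ back to the heat-kernel interaction of \S 4 is motivation rather than proof and can be dropped, and you invoke ``the value of $\eta$'' without having introduced the Dedekind eta function --- better to phrase everything in terms of $E_\Lambda'(0)$.
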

We derive the expected energy for a hexagonal lattice using our main result: we will use $t = c/n$ and then let $c$ become large (this is the same $c$ as in the expression in Corollary 2).
Corollary 2 seems to be like it could be related to things that are interesting in their own right. It seems that the convergence happens from below: differentiating the integral in $c$ and asking for the derivative to ultimately nonnegative is the same as asking for
$$  L\left(e^{-\frac{2\pi}{\sqrt{3} c}} \right) \geq c \qquad \mbox{for}~c~\mbox{sufficiently large.}$$
This inequality is equivalent to, for $q$ sufficiently close to 1
$$\sum_{m,n = - \infty}^{\infty}{q^{m^2 + mn + n^2}} \geq \frac{2\pi}{\sqrt{3} \log{(1/q)}}$$
At least numerically, the inequality seems to be true for all $0 < q < 1$. Once $q$ gets close to 1, we can use results from asymptotic analysis to argue as follows:  using an identity of Borwein \& Borwein \cite{borwein}, we have
$$ L(q) = \theta_3(q) \theta_3(q^3) + \theta_2(q) \theta_2(q^3),$$
where
$$ \theta_2(q) = \sum_{k=-\infty}^{\infty} q^{(k+1/2)^2} \qquad \mbox{and} \qquad \theta_3(q) = \sum_{k=-\infty}^{\infty} q^{k^2}$$
are the classical Jacobi theta functions. We thus require results for how they asymptotically behave as $q \rightarrow 1$. Here we refer to a result of Olde Daalhuis \cite{olde}. The special case $z=1$ in \cite[\S 3.15.b]{olde} and \cite[\S 3.14.c]{olde} is
\begin{align*}
\theta_2(q) = (q^2, q^2)_{\infty} \cdot \exp\left( - \frac{1}{\log{q}} \frac{\pi^2}{12} + \frac{\log{q}}{12}+ \sum_{k=1}^{\infty} \frac{1}{k \sinh \left(\frac{\pi^2 k}{\log{q}}\right)} \right),
\end{align*}
and
\begin{align*}
\theta_3(q) = (q^2, q^2)_{\infty} \cdot \exp\left( - \frac{1}{\log{q}} \frac{\pi^2}{12} + \frac{\log{q}}{12}+ \sum_{k=1}^{\infty} \frac{(-1)^k}{k \sinh \left(\frac{\pi^2 k}{\log{q}}\right)} \right),
\end{align*}
where $(a,q)_{\infty}$ is the $q-$analogue of the Pochhammer symbol. We thus require asymptotic results for 
$$(q^2, q^2)_{\infty} = \prod_{k=0}^{\infty} (1- q^{2n+2}).$$
 These are classical (see \cite[Chapter 3]{apostol} or \cite[Corollary 1.2]{kat}) and
$$ (q^2; q^2)_{\infty} = \exp\left(  - \frac{\pi^2}{12 \log{(1/q)}} - \frac{1}{2} \log \left( \frac{\log{(1/q)}}{\pi} \right) + \frac{\log{(1/q)}}{12} - \sum_{k=1}^{\infty} \frac{1}{k} \frac{\widehat{q}^k}{1 - \widehat{q}^k}  \right),$$
where $\widehat{q}$ is an abbreviation for 
$$ \widehat{q} = \exp \left(-\frac{2\pi^2}{\log{(1/q)}}\right).$$
These identities suffice to establish the desired inequality for $q$ close to 1 but one would certainly expect it to be true for all $0 < q < 1$. None of this impacts our main argument but these types of considerations may become useful when trying to use the main identity at very small scales $t \sim (\log{n})^c/n$.

\subsection{Incomplete Gamma Function.}
As another byproduct, we obtain some very precise energy asymptotics for the incomplete gamma function
$$ \Gamma(0,z) = \int_{z}^{\infty} \frac{e^{-t}}{t} dt$$
when evaluated over the hexagonal lattice. The standard hexagonal lattice $\Lambda$ in $\mathbb{R}^2$ is generated by the vectors
$$ v_1 = (1,0) \qquad \mbox{and} \qquad v_2 = \left(\frac{1}{2}, \frac{\sqrt{3}}{2}\right).$$
We note that, for $k_1, k_2 \in \mathbb{Z}$,
$$ \left\| k_1 v_1 + k_2 v_2 \right\|^2 =  k_1^2 + k_1 k_2 + k_2^2.$$
Therefore,
$$ \sum_{\lambda \in \Lambda \atop \lambda \neq 0} \Gamma(0, \varepsilon \| \lambda\|^2)=  \sum_{  (k_1, k_2) \in \mathbb{Z}^2 \atop (k_1, k_2) \neq (0,0)}  \Gamma\left(0,  \varepsilon (k_1^2 + k_1 k_2 + k_2^2) \right).$$
We obtain a three-term expansion.

\begin{corollary}
As $\varepsilon \rightarrow 0$, we have
 \begin{align*}  \sum_{  (k_1, k_2) \in \mathbb{Z}^2 \atop (k_1, k_2) \neq (0,0)}  \Gamma\left(0,  \varepsilon (k_1^2 + k_1 k_2 + k_2^2) \right) &= \frac{2\pi}{\sqrt{3}} \frac{1}{\varepsilon} - \log{\left(\frac{1}{\varepsilon}\right)} 
 + \gamma + \log \left(\frac{4\pi^2}{\sqrt{3}  \Gamma  \left(\frac{1}{3} \right)^6}\right) + o(1).
\end{align*}
\end{corollary}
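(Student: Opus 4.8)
The plan is to analyze the lattice sum $S(\varepsilon) = \sum_{\lambda \in \Lambda \setminus \{0\}} \Gamma(0, \varepsilon \|\lambda\|^2)$ via the integral representation $\Gamma(0,z) = \int_z^\infty e^{-t}/t\, dt = \int_1^\infty e^{-zu}/u\, du$ (substituting $t = zu$). This lets us write $S(\varepsilon) = \int_1^\infty \frac{1}{u} \sum_{\lambda \neq 0} e^{-\varepsilon u \|\lambda\|^2}\, du = \int_1^\infty \frac{1}{u}\bigl(\Theta_\Lambda(\varepsilon u) - 1\bigr)\, du$, where $\Theta_\Lambda(x) = \sum_{\lambda \in \Lambda} e^{-x\|\lambda\|^2} = \sum_{m,n} e^{-x(m^2+mn+n^2)}$ is precisely $L(e^{-x})$ in the notation of the paper. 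So the task reduces to understanding $\int_1^\infty \frac{1}{u}(L(e^{-\varepsilon u}) - 1)\, du$ as $\varepsilon \to 0$, which is the same object appearing in Corollary 2 — consistency with that corollary is a good internal check.

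The key input is the theta transformation (Poisson summation) for the hexagonal lattice: since $\Lambda$ has covolume $\sqrt{3}/2$ and dual lattice $\Lambda^* = \frac{2}{\sqrt 3}\Lambda$ (a rotated/rescaled copy), one gets $\Theta_\Lambda(x) = \frac{2\pi}{\sqrt{3}\,x}\,\Theta_{\Lambda^*}(\pi^2/x)$ with $\Theta_{\Lambda^*}(y) = \sum_{\mu \in \Lambda^*} e^{-y\|\mu\|^2} = L(e^{-y \cdot 4\pi^2/(3\cdot?)})$ — more cleanly, $\Theta_\Lambda(x) = \frac{2\pi}{\sqrt 3 x}\bigl(1 + (\text{exponentially small in }1/x)\bigr)$ as $x \to 0$. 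I would split the $u$-integral at $u = 1/\sqrt\varepsilon$ (or any scale $\to \infty$ with $\varepsilon u \to 0$ on the lower piece): on $[1, \varepsilon^{-1/2}]$ use the small-argument expansion $L(e^{-\varepsilon u}) - 1 = \frac{2\pi}{\sqrt 3}\frac{1}{\varepsilon u} - 1 + O(\text{exp. small})$, giving $\int_1^{\varepsilon^{-1/2}}\frac{1}{u}\bigl(\frac{2\pi}{\sqrt 3 \varepsilon u} - 1\bigr)du = \frac{2\pi}{\sqrt 3 \varepsilon}(1 - \sqrt\varepsilon) - \tfrac12\log(1/\varepsilon)$; on $[\varepsilon^{-1/2}, \infty)$ the theta function is exponentially close to $1$ so $L(e^{-\varepsilon u}) - 1$ decays like $e^{-\varepsilon u}$ and the integral is $\int_{\varepsilon^{-1/2}}^\infty \frac{1}{u}(L(e^{-\varepsilon u})-1)\,du$, which after $v = \varepsilon u$ becomes $\int_{\sqrt\varepsilon}^\infty \frac{1}{v}(L(e^{-v})-1)\,dv$, a fixed (independent of $\varepsilon$) quantity up to the lower endpoint. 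The leading $\frac{2\pi}{\sqrt 3 \varepsilon}$ and the $-\log(1/\varepsilon)$ fall out immediately; the constant term is the subtle part and equals a renormalized value of $\int_0^\infty \frac{1}{v}(L(e^{-v}) - 1 - \frac{2\pi}{\sqrt 3 v}\mathbf{1}_{v<1})\,dv$ type expression plus the explicit pieces collected at the splitting point.

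The main obstacle — and the place where the specific constant $\gamma + \log\bigl(4\pi^2/(\sqrt 3\,\Gamma(1/3)^6)\bigr)$ must be extracted — is evaluating the renormalized lattice integral in closed form. The cleanest route is to recognize $\int_1^\infty \frac{1}{u}(\Theta_\Lambda(\varepsilon u) - 1)\,du$ as essentially the Epstein zeta function of $\Lambda$ near $s = 0$: indeed $\int_0^\infty u^{s-1}(\Theta_\Lambda(u) - 1)\,du = \Gamma(s)\, Z_\Lambda(s)$ where $Z_\Lambda(s) = \sum_{\lambda \neq 0}\|\lambda\|^{-2s}$, and the Chowla–Selberg formula evaluates $Z_\Lambda$ (for the hexagonal lattice, tied to $\mathbb{Q}(\sqrt{-3})$) in terms of $\Gamma(1/3)$. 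Specifically one needs $Z_\Lambda(0) = -1$ and $Z_\Lambda'(0)$, and the Kronecker limit formula / Chowla–Selberg gives $Z_\Lambda'(0) = -\log\bigl(\text{something involving }\Gamma(1/3)^6/(\text{powers of }\pi,\sqrt 3)\bigr)$; combined with the Laurent expansion $\Gamma(s) = \frac{1}{s} - \gamma + O(s)$ and a Mellin-type bookkeeping of the truncation at $u=1$, this produces exactly the stated constant. I expect the bulk of the work is careful tracking of the finite-part/renormalization constants through the splitting and the $s\to 0$ limit; the hexagonal special values ($\zeta$, $L$-function of the quadratic character mod $3$, and $\Gamma(1/3)$) are classical and can be quoted. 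A sanity check: the formula should be consistent with Corollary 2 under the change of variables relating $\varepsilon$ there (which involves $2\pi/(\sqrt 3 s)$ in the exponent) to $\varepsilon$ here.
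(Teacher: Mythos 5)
Your outline is correct, but it takes a genuinely different route from the paper. The paper gets Corollary 3 essentially for free from Corollary 2: it integrates term by term using $\int_0^c \frac{1}{2s}e^{-\alpha/s}\,ds = \frac{1}{2}\Gamma(0,\alpha/c)$, so the lattice sum of incomplete gamma functions is exactly twice the renormalized integral of Corollary 2, then substitutes the closed-form value $c_{\mathrm{BHS}} = 2\log 2 + \tfrac12\log\tfrac23 + 3\log(\sqrt{\pi}/\Gamma(1/3))$ quoted from \cite{brauchart} (legitimate here since the computation concerns the hexagonal lattice itself, not its unproven optimality) and changes variables $\varepsilon = 2\pi/(\sqrt3\,c)$. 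You instead redo the asymptotics from scratch: the representation $\Gamma(0,z)=\int_1^\infty e^{-zu}u^{-1}du$, the theta transformation to extract $\frac{2\pi}{\sqrt3}\varepsilon^{-1}$ and $-\log(1/\varepsilon)$, and Chowla--Selberg/Kronecker limit data at $s=0$ of $\Gamma(s)Z_\Lambda(s)$ for the constant. What your route buys is self-containedness: it in effect re-derives the BHS constant rather than importing it, and it makes transparent where $\gamma$ (from $\Gamma(s)=1/s-\gamma+O(s)$ against $Z_\Lambda(0)=-1$) and $\Gamma(1/3)^6$ (from $Z_\Lambda'(0)$ via $Z_\Lambda(s)=6\zeta(s)L(s,\chi_{-3})$, $\zeta'(0)=-\tfrac12\log(2\pi)$, and Lerch's formula for $L'(0,\chi_{-3})$) come from; the cost is that these special-value formulas must be quoted and the finite-part bookkeeping done carefully. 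Two points to tighten if you write this up: your tail integral $\int_{\sqrt\varepsilon}^\infty \frac1v(L(e^{-v})-1)\,dv$ is not bounded as $\varepsilon\to0$ --- it diverges like $\frac{2\pi}{\sqrt3}\varepsilon^{-1/2}$, which must be shown to cancel the $-\frac{2\pi}{\sqrt3}\varepsilon^{-1/2}$ produced by your first piece, so state that cancellation explicitly rather than calling the tail ``fixed up to the lower endpoint''; and the constant extraction should be carried through to the stated value $\gamma+\log\left(4\pi^2/(\sqrt3\,\Gamma(1/3)^6)\right)$ rather than left as ``expected,'' since that identification is the entire content of the corollary.
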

We observe that $\Gamma(0,z)$ does decay exponentially for $|z| \gtrsim 1$ and it is thus relatively easy to check this expansion (since only few lattice points close to the origin will actually have non-negligible contributions). The error term might be quite small.

\subsection{Outline.}
\S 3 recalls some of the basic properties of Green's functions and the heat equation and proves the main result. \S 4 derives the new lower bound on the constant, various ways how one could hope to improve it further and discusses the `Meta-Theorem'. \S 5 discusses the representation formulas, Corollary 2 and Corollary 3.

\section{Proof of the Theorem}
Experts in Operator Theory may recognize the main result as a fairly simple consequence of the identity
$$ \frac{1}{\lambda} = \int_{0}^{t} e^{-\lambda s} ds + \frac{e^{-\lambda t}}{\lambda}.$$
However, we will give a completely elementary and explicit argument that tries to be as concrete as possible at all times.
\subsection{Preliminary Facts.} Let $f$ be a measure on a smooth, compact manifold $(M,g)$ without boundary (we will apply this later to $\mathbb{S}^2$ equipped with the canonical metric). We use $e^{t \Delta}f$ to denote the solution of the heat equation
$$ \frac{\partial u}{\partial t} = \Delta u$$
initialized with $u(0, \cdot) = f$ after $t$ units of time. Denoting the $L^2-$normalized spherical harmonics by $-\Delta \phi_k = \lambda_k \phi_k$, we 
have from the linearity of the heat equation and the completeness of $\left( \phi_k \right)_{k=0}^{\infty}$ in $L^2$
 $$ e^{t\Delta}f = \sum_{k = 0}^{\infty}{ e^{-\lambda_k t} \left\langle f, \phi_k \right\rangle \phi_k}.$$
The Green function is defined as the solution of
$$ - \Delta_x \int_{M}{G(x,y) f(y) dy} = f(x).$$
This implies that we have to have
$$ \int_{M} G(x,y) \phi_k(y) dy = \frac{\phi_k(x)}{\lambda_k}$$
and thus, by linearity,
 $$ \int_{M} G(x,y) f(y) dy = \sum_{k=1}^{\infty}{ \frac{ \left\langle f, \phi_k \right\rangle}{\lambda_k}\phi_k(x)}.$$
This is similar to the Sobolev norm $\dot H^{-1}$ which is defined by
 $$ \|f\|_{\dot H^{-1}}^2 = \sum_{k=1}^{\infty}{\frac{ \left\langle f, \phi_k \right\rangle^2}{\lambda_k}}.$$
Note, in particular, that these two notions are related via
\begin{align*}
 \int_{M \times M} G(x,y) f(x) f(y) dx dy &= \left\langle \int_{M} G(x,y) f(y) dy, f(x) \right\rangle \\
 &=  \sum_{k=1}^{\infty}{ \frac{ \left\langle f, \phi_k \right\rangle^2}{\lambda_k}} = \|f\|_{\dot H^{-1}}^2.
 \end{align*}

 We note that the heat equation and the Green function are both spectral multipliers and thus,  whenever $s_1 + t_1 = s_2 + t_2$ and all four numbers are positive,
 \begin{align*}
 \int_{M} \int_{M} G(x,y) e^{s_1\Delta} f(x) e^{t_1 \Delta} g(y) dx dy &= \sum_{k=1}^{\infty}{ e^{-s_1\lambda_k} \frac{\left\langle f, \phi_k\right\rangle \left\langle g, \phi_k \right\rangle}{\lambda_k} e^{-t_1\lambda_k }} \\
  &= \sum_{k=1}^{\infty}{ e^{-s_2\lambda_k} \frac{\left\langle f, \phi_k\right\rangle \left\langle g, \phi_k \right\rangle}{\lambda_k} e^{-t_2\lambda_k }} \\
 &=  \int_{M} \int_{M} G(x,y) e^{s_2\Delta} f(x) e^{t_2 \Delta} g(y) dx dy.
 \end{align*}
We will use this specifically, when $f$ and $g$ are Dirac measures in two distinct points $x_k \neq x_{\ell}$ where this identity implies that
 \begin{align*} \int_{\mathbb{S}^2} \int_{\mathbb{S}^2} G(x,y) e^{t/2\Delta} \delta_{x_k}(x) e^{t/2 \Delta} \delta_{x_{\ell}} (y) dx dy &= 
    \int_{\mathbb{S}^2} \int_{\mathbb{S}^2} G(x,y)  \delta_{x_k}(x) e^{t\Delta} \delta_{x_{\ell}}(y) dx dy \\
    &=  \int_{\mathbb{S}^2} G(x_k,y)  e^{t\Delta} \delta_{x_{\ell}}(y)  dy.
    \end{align*}

\subsection{The identity on $\mathbb{S}^2$.} We will now derive the main result. In \S 3.3 we will explain how the argument can be adapted to other geometries.
We use 
$$ \mathbb{S}^2 = \left\{(x,y,z) \in \mathbb{R}^3: x^2 + y^2 + z^2 = 1\right\}$$
to denote the standard sphere in $\mathbb{R}^3$ equipped with the canonical metric and thus normalized to have surface area $4\pi$. The Green's function in terms of the Euclidean distance is then given by (see e.g. \cite{beltran0})
 $$G(x,y) = \frac{1}{2\pi} \log{ \frac{1}{\|x-y\|}} +c_{2}$$
where $c_{2}$ is chosen so that $G(x, \cdot)$ has mean value 0 on $\mathbb{S}^2$.
Since
 $$\frac{1}{(4\pi)^2} \int_{\mathbb{S}^2} \int_{\mathbb{S}^2} \log \frac{1}{\|x-y\|}dx dy =   \frac{1}{2} - \log{2}$$
 we have from the symmetry of the sphere that for all $x \in \mathbb{S}^2$
  $$\frac{1}{4\pi}  \int_{\mathbb{S}^2} \log \frac{1}{\|x-y\|}dy =   \frac{1}{2} - \log{2}$$
and thus
 $$ \int_{\mathbb{S}^2} \frac{1}{2\pi} \log \frac{1}{\|x-y\|}dx dy =   1 - 2\log{2}$$
from which we deduce that
$$c_2= -\frac{1}{4\pi} + \frac{\log{2}}{2\pi}.$$
We refer to Beltran, Criado del Rey \& Corral \cite{beltran0} for more details and examples of Green functions on other `nice' manifolds.

\begin{proof}[Proof of the Theorem] Recall that
$$ \Delta_y G(x,y) = \frac{1}{4\pi} - \delta_x.$$
Fixing the measure
$$ \mu = \frac{1}{n} \sum_{k=1}^{n}{ \delta_{x_k}},$$
we have
 \begin{align*}
 \left\| e^{(t/2) \Delta} \mu \right\|_{\dot H^{-1}}^2 &= \int_{\mathbb{S}^2} \int_{\mathbb{S}^2} G(x,y) e^{t/2\Delta} \mu(x) e^{t/2 \Delta} \mu (y) dx dy\\
   &= \frac{1}{n^2} \sum_{k, \ell = 1}^n \int_{\mathbb{S}^2} \int_{\mathbb{S}^2} G(x,y) e^{t/2\Delta} \delta_{x_k}(x) e^{t/2 \Delta} \delta_{x_{\ell}} (y) dx dy \\
   &= \frac{1}{n^2} \sum_{k=1}^n \int_{\mathbb{S}^2} \int_{\mathbb{S}^2} G(x,y) e^{t/2\Delta} \delta_{x_k}(x) e^{t/2 \Delta} \delta_{x_{k}} (y) dx dy\\
   &+ \frac{1}{n^2} \sum_{k, \ell = 1 \atop k \neq \ell}^n \int_{\mathbb{S}^2} \int_{\mathbb{S}^2} G(x,y) e^{t/2\Delta} \delta_{x_k}(x) e^{t/2 \Delta} \delta_{x_{\ell}} (y) dx dy.
   \end{align*}
 We first control the off-diagonal terms. As noted above, we can rewrite them as
  \begin{align*} \int_{\mathbb{S}^2} \int_{\mathbb{S}^2} G(x,y) e^{t/2\Delta} \delta_{x_k}(x) e^{t/2 \Delta} \delta_{x_{\ell}} (y) dx dy &= 
    \int_{\mathbb{S}^2} \int_{\mathbb{S}^2} G(x,y)  \delta_{x_k}(x) e^{t\Delta} \delta_{x_{\ell}}(y) dx dy \\
    &=  \int_{\mathbb{S}^2} G(x_k,y)  e^{t\Delta} \delta_{x_{\ell}}(y)  dy.
    \end{align*}
      We also note that, since $x_k \neq x_{\ell}$, as $t \rightarrow 0$,
  $$ \lim_{t \rightarrow \infty}  \int_{\mathbb{S}^2} G(x_k ,y)e^{t \Delta} \delta_{x_{\ell}} (y) dy = G(x_k, x_{\ell}).$$
  Finally, assuming again $x_k \neq x_{\ell}$, we control the variation in time via
  $$ \frac{\partial}{\partial t} e^{t \Delta} \delta_{x_{\ell}} (y) = \Delta_y e^{t \Delta} \delta_{x_{\ell}} (y),$$
  integration by parts and
  $$ \Delta_y G(x_k ,y) =  \frac{1}{4\pi} - \delta_{x_k}$$
  to conclude
 \begin{align*}
 \frac{\partial}{\partial t}  \int_{\mathbb{S}^2} G(x_k ,y)e^{t \Delta} \delta_{x_{\ell}} (y) dy &=   \int_{\mathbb{S}^2} G(x_k ,y) \Delta_y e^{t \Delta} \delta_{x_{\ell}} (y) dy\\
 &=   \int_{\mathbb{S}^2} \Delta_y G(x_k ,y)  e^{t \Delta} \delta_{x_{\ell}} (y) dy \\
 &=  \int_{\mathbb{S}^2} \left(\frac{1}{4\pi} - \delta_{x_k}\right)  e^{t \Delta} \delta_{x_{\ell}} (y) dy\\
 &= \frac{1}{4\pi} - \left( e^{t \Delta} \delta_{x_{\ell}} \right)(x_k).
  \end{align*}
  We use this with the fundamental Theorem of Calculus to conclude that
 \begin{align*}
  \int_{\mathbb{S}^2} \int_{\mathbb{S}^2} G(x,y) e^{t/2\Delta} \delta_{x_k}(x) e^{t/2 \Delta} \delta_{x_{\ell}} (y) dx dy &= 
    \int_{\mathbb{S}^2} \int_{\mathbb{S}^2} G(x,y)  \delta_{x_k}(x) e^{t\Delta} \delta_{x_{\ell}}(y) dx dy \\
    &=    \int_{\mathbb{S}^2} G(x_k,y)  e^{t\Delta} \delta_{x_{\ell}}(y)  dy\\
    &= G(x_k, x_{\ell}) +  \frac{t}{4\pi} - \int_{0}^t \left( e^{s \Delta} \delta_{x_{\ell}} \right)(x_k) ds.
  \end{align*}
  
 It remains to analyze the diagonal terms which are of the form
  \begin{align*}
   \int_{\mathbb{S}^2} G(x_k,y) e^{t \Delta} \delta_{x_{k}} (y) dy.
   \end{align*}
However, by the symmetries of $\mathbb{S}^2$, the value of this integral depends only on $t$ and not on $x_k$.
Collecting all these terms, we have established the identity, for any arbitrary point $z \in \mathbb{S}^2$
 \begin{align*}
\left\| e^{(t/2) \Delta} \mu \right\|_{\dot H^{-1}}^2 &= \frac{1}{n^2} \sum_{k, \ell=1 \atop k \neq \ell}^n G(x_k, x_{\ell}) + \frac{n(n-1)}{n^2} \frac{t}{4\pi} \\
 &- \frac{1}{n^2}\sum_{k, \ell =1 \atop k \neq \ell}^n  \int_{0}^t \left( e^{s \Delta} \delta_{x_{\ell}} \right)(x_k) ds 
   + \frac{1}{n}     \int_{\mathbb{S}^2} G(z,y) e^{t \Delta} \delta_{z} (y) dy.
      \end{align*}
     We recall that
      $$G(x,y) = \frac{1}{2\pi} \log{ \frac{1}{\|x-y\|}} +\left( -\frac{1}{4\pi} + \frac{\log{2}}{2\pi} \right)$$
and therefore arrive at
       \begin{align*}
2 \pi n^2\left\| e^{(t/2) \Delta} \mu \right\|_{\dot H^{-1}}^2 &=   \left( \sum_{k, \ell=1 \atop k \neq \ell}^n \log \frac{1}{\|x_k - x_{\ell}\|} \right)+ 2\pi \left( -\frac{1}{4\pi} + \frac{\log{2}}{2\pi}\right) n(n-1) \\
&+ n(n-1) \frac{t}{2}  - 2\pi \sum_{k, \ell =1 \atop k \neq \ell}^n  \int_{0}^t \left( e^{s \Delta} \delta_{x_{\ell}} \right)(x_k) ds \\
   &+   2\pi n  \int_{\mathbb{S}^2} G(z,y) e^{t \Delta} \delta_{z} (y) dy.
      \end{align*}
Altogether, we arrive at, for any $z \in \mathbb{S}^2$,
\begin{align*}
\sum_{k, \ell =1 \atop k \neq \ell}^n \log \frac{1}{\|x_k - x_{\ell}\|} &= \left(\frac{1}{2} - \log{2} \right)n^2 - \left( \frac{1}{2} - \log{2}\right)n - \frac{n(n-1)t}{2}  \\
& + 2\pi \sum_{k, \ell=1 \atop k \neq \ell}^n  \int_{0}^t \left( e^{s \Delta} \delta_{x_{\ell}} \right)(x_k) ds  -  2\pi n  \int_{\mathbb{S}^2} G(z,y) e^{t \Delta} \delta_{z} (y) dy \\
& +2 \pi n^2\left\| e^{(t/2) \Delta} \mu \right\|_{\dot H^{-1}}^2.
\end{align*}
We see that this implies the desired statement since the expression involving $z \in \mathbb{S}^2$ is completely arbitrary and independent of the points
$\left\{x_1, \dots, x_n\right\}$.
\end{proof}

In particular, we can rearrange the expression and see that the constant (which the theorem guarantees to be independent of the actual points)
$$ X = \frac{1}{2\pi} \sum_{k \neq \ell} \log \frac{1}{\|x_k - x_{\ell}\|}  - \sum_{k \neq \ell}  \int_{0}^t \left( e^{s \Delta} \delta_{x_{\ell}} \right)(x_k) ds - \left\| e^{(t/2) \Delta} \left( \sum_{k=1}^{n} \delta_{x_k} \right) \right\|_{\dot H^{-1}}^2$$
is given by
\begin{align*}
X &= \frac{1}{2\pi} \left(\frac{1}{2} - \log{2} \right)n(n-1)  - \frac{n(n-1)t}{4\pi}  -n  \int_{\mathbb{S}^2} G(z,y) e^{t \Delta} \delta_{z} (y) dy, 
\end{align*}
where $z \in \mathbb{S}^2$ is arbitrary (the integral is constant in $z$).
\subsection{Other manifolds} A similar argument can be carried out on any smooth, compact Riemannian manifold $(M,g)$. We note that we have
$$ \Delta_y G(x,y) = \frac{1}{\mbox{vol}(M)} - \delta_x.$$
The first half of the argument is identical. We arrive at
 \begin{align*}
  \int_{M} \int_{M} G(x,y) e^{t/2\Delta} \delta_{x_k}(x) e^{t/2 \Delta} \delta_{x_{\ell}} (y) dx dy &= G(x_k, x_{\ell}) +  \frac{t}{\mbox{vol}(M)} \\
  &- \int_{0}^t \left( e^{s \Delta} \delta_{x_{\ell}} \right)(x_k) ds.
  \end{align*}
The second half of the argument gives
 \begin{align*}
\left\| e^{(t/2) \Delta} \mu \right\|_{\dot H^{-1}}^2 &= \frac{1}{n^2} \sum_{k \neq \ell} G(x_k, x_{\ell}) + \frac{n(n-1)}{n^2} \frac{t}{4\pi} \\
 &- \frac{1}{n^2}\sum_{k \neq \ell}  \int_{0}^t \left( e^{s \Delta} \delta_{x_{\ell}} \right)(x_k) ds 
   + \frac{1}{n^2}  \sum_{k=1}^{n}     \int_{M} G(x_k,y) e^{t \Delta} \delta_{x_k} (y) dy.
      \end{align*}
We observe that the new quantity       
$$ \frac{1}{n^2}  \sum_{k=1}^{n}     \int_{M} G(x_k,y) e^{t \Delta} \delta_{x_k} (y) dy \qquad \mbox{is}~not~\mbox{independent}$$
of the set of points and will generically depend on them. However, on manifolds with additional symmetry such as $\mathbb{S}^d$, it is possible for each integral
to be actually independent of $x_k$ in which case we obtain a result analogous to our Theorem. This is particularly interesting in other domains for which explicit expression
for the Green function are available:  Beltran, Criado del Rey \& Corral \cite{beltran00} list several. Our argument could conceivably be carried out on all of them (though it is
a priori less clear whether everything can be done in closed form).
On general two-dimensional manifolds, for example, we expect that for time $t$ small, the expression behaves in a way that is indistinguishable from Euclidean space: we expect
$$ G(x_k, y) \sim c \log{\frac{1}{\|x_k -y\|}} + c_2$$
where $c$ does not depend on the point $x_k$. This leads to a logarithmic term (the second term in the asymptotic expansion of $\mathcal{E}_{\log}$) which
is known to only depend on the volume of the manifold (an argument that can be found in Elkies \cite{lang}).\\

\section{A Lower Bound on the Constant: Proof of Corollary }
\subsection{A Lemma}
We need an additional ingredient: an estimate for the quantity that we know is independent of the actual point (but, since we are interested in an explicit constant, needs to be estimated).
\begin{lemma} Let $z \in \mathbb{S}^2$. Then, as $t \rightarrow 0$,
$$ \int_{\mathbb{S}^2} G(z,y) e^{t \Delta} \delta_{z} (y) dy =  - \frac{1}{4\pi} + \frac{\log{2}}{2\pi} + \frac{\gamma}{4\pi} - \frac{\log{(4t)}}{4\pi} + o(1),$$
where $\gamma$ denotes the Euler-Mascheroni constant.
\end{lemma}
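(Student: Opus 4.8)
The plan is to expand everything in terms of the spectral decomposition on $\mathbb{S}^2$ and recognize the resulting sum as a classical series whose small-$t$ asymptotics are known. Write $G(z,y) = \sum_{k\geq 1} \lambda_k^{-1} \phi_k(z)\phi_k(y)$ and $e^{t\Delta}\delta_z(y) = \sum_{k\geq 0} e^{-\lambda_k t}\phi_k(z)\phi_k(y)$, so that, using orthonormality and the fact that the $\lambda_0$-term of the Green's function is absent,
$$ \int_{\mathbb{S}^2} G(z,y)\, e^{t\Delta}\delta_z(y)\, dy = \sum_{k=1}^{\infty} \frac{e^{-\lambda_k t}}{\lambda_k} \phi_k(z)^2. $$
Because of the symmetry of the sphere this is independent of $z$, so I may average over $z \in \mathbb{S}^2$: using $\int_{\mathbb{S}^2}\phi_k(z)^2\,dz = 1$ and dividing by $|\mathbb{S}^2| = 4\pi$, the expression equals $\frac{1}{4\pi}\sum_{k=1}^\infty e^{-\lambda_k t}/\lambda_k$. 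On $\mathbb{S}^2$ the eigenvalues are $\lambda_\ell = \ell(\ell+1)$ with multiplicity $2\ell+1$ for $\ell = 0,1,2,\dots$, so the quantity to analyze is
$$ \int_{\mathbb{S}^2} G(z,y)\, e^{t\Delta}\delta_z(y)\, dy = \frac{1}{4\pi}\sum_{\ell=1}^{\infty} \frac{(2\ell+1)\, e^{-\ell(\ell+1)t}}{\ell(\ell+1)}. $$

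Next I would extract the small-$t$ asymptotics of $S(t) := \sum_{\ell\geq 1}(2\ell+1)e^{-\ell(\ell+1)t}/(\ell(\ell+1))$. The natural move is to note $\frac{d}{dt}\bigl[-S(t)\bigr] = \sum_{\ell\geq 1}(2\ell+1)e^{-\ell(\ell+1)t} = \frac{d}{dt}\bigl[-\Theta(t)\bigr]$ where $\Theta(t) = \sum_{\ell \geq 0} (2\ell+1)e^{-\ell(\ell+1)t} = \sum_{\ell\geq 0}(2\ell+1)e^{-(\ell+1/2)^2 t}e^{t/4}$ is essentially the heat trace on $\mathbb{S}^2$. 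By Poisson summation (or the known heat-trace expansion on $\mathbb{S}^2$), $\Theta(t) = \frac{1}{t} + \frac{1}{3} + O(t)$ as $t\to 0^+$, hence $S'(t) = -\frac{1}{t} + O(1)$ near $0$. Integrating, $S(t) = -\log t + C + o(1)$ for some constant $C$; to pin down $C$ I would integrate the identity $S'(t) = -\Theta(t) + \bigl(\text{correction from the }\ell=0\text{ term, which is }0\bigr)$ against a reference, e.g. compare $S(t) + \log t$ to its limit using $\int_0^\infty$-type manipulations, or more cleanly use the Mellin transform: $\sum_{\ell\geq 1}\frac{2\ell+1}{\ell(\ell+1)}e^{-\ell(\ell+1)t}$ has Mellin transform $\Gamma(s)\sum_{\ell\geq 1}\frac{2\ell+1}{(\ell(\ell+1))^{s+1}}$, whose pole structure at $s=0$ (from the $\ell$-sum behaving like $2\zeta(2s+1)$) gives the $-\log t$ and the constant in one stroke, the constant involving $\gamma$ through the Laurent expansion $\zeta(1+2s) = \frac{1}{2s} + \gamma + O(s)$.

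Finally I would assemble the constant. Writing $S(t) = -\log t + C + o(1)$, we get $\int G(z,y)e^{t\Delta}\delta_z(y)\,dy = \frac{1}{4\pi}(-\log t + C) + o(1)$, and matching against the claimed answer $-\frac{1}{4\pi} + \frac{\log 2}{2\pi} + \frac{\gamma}{4\pi} - \frac{\log(4t)}{4\pi}$ forces $C = \gamma + \log 4 - 1 + 2\log 2 \cdot \text{(sign check)}$; more precisely $-\log t + C$ must equal $-\log(4t) + \gamma + 2\log 2 - 1 = -\log t + \gamma + 2\log 2 - 1 - \log 4$, i.e. I expect $C = \gamma - 1$ (the $\log 2$ and $\log 4$ pieces should cancel, a good consistency check on the computation). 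I expect the main obstacle to be the bookkeeping of the constant term: the leading $-\log t$ and even the $O(1)$ shape are routine, but getting the exact value (and verifying the $\gamma$ and the $\log 2$ combine correctly) requires care with the Mellin/Poisson analysis, in particular tracking how the simple pole of $\zeta(1+2s)$ and the finite part of $\Gamma(s)$ at $s=0$ interact. An alternative route to the constant, which I would use as a cross-check, is to write $1/(\ell(\ell+1)) = 1/\ell - 1/(\ell+1)$, telescoping against the Gaussian weight and reducing to the asymptotics of $\sum_{\ell\geq 1} e^{-\ell(\ell+1)t}/\ell$, which is directly comparable to $-\log(1-e^{-t}) \sim -\log t$ plus a $\gamma$-type correction from the quadratic-versus-linear exponent.
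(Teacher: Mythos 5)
Your route is genuinely different from the paper's. The paper stays in physical space: it splits $G(z,y)=\frac{1}{2\pi}\log\frac{1}{\|z-y\|}+c_2$, uses that the heat kernel has unit mass to handle the constant $c_2$, replaces $e^{t\Delta}\delta_z$ by the flat Gaussian $\frac{1}{4\pi t}e^{-\|y\|^2/4t}$ (with a justified $1+\mathcal{O}(t)$ error), and evaluates $\int_{\mathbb{R}^2}\log(1/\|y\|)\,e^{t\Delta}\delta_0$ in polar coordinates via the exponential integral, which is where $\gamma-\log(4t)$ comes from. You instead diagonalize everything: your reduction to the exact identity
$$\int_{\mathbb{S}^2}G(z,y)\,e^{t\Delta}\delta_z(y)\,dy=\frac{1}{4\pi}\sum_{\ell\geq 1}\frac{(2\ell+1)\,e^{-\ell(\ell+1)t}}{\ell(\ell+1)}$$
is correct (by the addition theorem $\sum_{m}|Y_\ell^m(z)|^2=\frac{2\ell+1}{4\pi}$, one does not even need to average over $z$), and it has the advantage of requiring no heat-kernel asymptotics or sphere-to-plane comparison at all; the price is that the constant must be extracted from a Dirichlet-series computation rather than from a closed-form integral. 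The one step you leave genuinely open is that extraction, and your sketch as written would miss a contribution: the constant is not produced solely by the interaction of the poles of $\Gamma(s)$ and $\zeta(1+2s)$. Writing $Z(s)=\sum_{\ell\geq1}\frac{2\ell+1}{(\ell(\ell+1))^{s+1}}=2\zeta(2s+1)+\sum_{\ell\geq1}\bigl(\frac{2\ell+1}{(\ell(\ell+1))^{s+1}}-\frac{2}{\ell^{2s+1}}\bigr)$, the second sum converges at $s=0$ and telescopes to $\sum_{\ell\geq1}\bigl(\frac{1}{\ell+1}-\frac{1}{\ell}\bigr)=-1$, so $Z(s)=\frac{1}{s}+(2\gamma-1)+O(s)$; combined with $\Gamma(s)=\frac1s-\gamma+O(s)$ and Mellin inversion through the double pole, this gives $S(t)=-\log t+(2\gamma-1)-\gamma+o(1)=-\log t+\gamma-1+o(1)$, which is exactly the value $C=\gamma-1$ you predicted by matching against the statement. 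With that piece filled in (and the routine justification of shifting the inversion contour), your argument is complete and arguably cleaner than the paper's, since it is exact until the very last asymptotic step.
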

\begin{proof} Recalling that
 $$G(x,y) = \frac{1}{2\pi} \log{ \frac{1}{\|x-y\|}} + \left( -\frac{1}{4\pi} + \frac{\log{2}}{2\pi} \right).$$
Noting that the heat kernel of a Dirac measure always has total integral 1 (since the heat kernel preserves the total integral of a function it is being applied to), we have
$$ \int_{\mathbb{S}^2} G(z,y) e^{t \Delta} \delta_{z} (y) dy=  \frac{1}{2\pi} \int_{\mathbb{S}^2} \log{ \left( \frac{1}{\|z-y\|}\right)} e^{t \Delta} \delta_{z} (y) dy +  \left( -\frac{1}{4\pi} + \frac{\log{2}}{2\pi} \right).$$
It remains to analyze the term
$$  \int_{\mathbb{S}^2} \log{ \left( \frac{1}{\|z-y\|}\right)} e^{t \Delta} \delta_{z} (y) dy$$
which is independent of the point $z$. We know that, as $t \rightarrow 0$,
 $$ \left[e^{t\Delta} \delta_{x_k}\right](x) \sim \frac{1}{4 \pi t} \exp \left(-\frac{\|x - x_k\|^2}{4t}\right)$$
and the remaining question is the size of the error term. On $\mathbb{S}^2$, this is rather well understood \cite{barilari, fischer}
and we have for $x,y$ sufficiently close to have uniqueness of geodesics (the formula would be slightly different if $x$ and $y$ were antipodal points but because of the rapid decay of the heat kernel, this does not play a role), $$ \left[e^{t\Delta} \delta_{x_k}\right](x) = \frac{1+ \mathcal{O}(t)}{4 \pi t} \exp \left(-\frac{\|x - x_k\|^2}{4t}\right).$$
 We first note that $e^{t \Delta} \delta_x$ has most of its mass in a $\sqrt{t}-$neighborhood of $x$ and is exponentially decaying outside of that, area distortion is locally quadratic, thus
$$ \int_{\mathbb{S}^2} \log{ \left( \frac{1}{\|z-y\|}\right)} e^{t \Delta} \delta_{z} (y) dy = \left(1 + o\left(\frac{1}{\log{t}}\right)\right) \int_{\mathbb{R}^2}   \log{ \left( \frac{1}{\|y\|}\right)} e^{t \Delta} \delta_{0} (y) dy,$$
where $e^{t\Delta}\delta_0$ denotes the heat kernel in $\mathbb{R}^2$ applied to a Dirac measure in the origin -- for this, there is an explicit formula
 $$ \left[e^{t\Delta_{\mathbb{R}^2}} \delta_{x_k}\right](x) = \frac{1}{4 \pi t} \exp \left(-\frac{\|x - x_k\|^2}{4t}\right).$$

Switching to polar coordinates, we have 
\begin{align*}
 \int_{\mathbb{R}^2}   \log{ \left( \frac{1}{\|y\|}\right)} e^{t \Delta} \delta_{0} (y) dy &=  \int_{\mathbb{R}^2}   \log{ \left( \frac{1}{\|y\|}\right)}  \frac{1}{4 \pi t} \exp \left(-\frac{\|y\|^2}{4t}\right) dy \\
 &= \frac{1}{4\pi t} \int_{0}^{\infty} \log{\left(\frac{1}{r}\right)} \exp\left(-\frac{r^2}{4t} \right) 2 \pi r dr.
\end{align*}
It remains to evaluate this integral: introducing the exponential integral function
$$ \ei(z) = - \int_{-z}^{\infty} \frac{e^{-t}}{t} dt,$$
we have the antiderivative
$$ \int_{}^{} \log{\left(\frac{1}{r}\right)} \exp\left(-\frac{r^2}{4t} \right) 2 \pi r dr = -2\pi t \ei\left(-\frac{r^2}{4t}\right) - 4\pi \exp\left(-\frac{r^2}{4t} \right) t \log{\left(\frac{1}{r}\right)}.$$
This quantity clearly tends to 0 as $r \rightarrow \infty$, it thus remains to understand the behavior as $r \rightarrow 0^+$. We can use the asymptotic expansion for $x \rightarrow 0^+$
$$ \ei(-x) = \log{x} + \gamma - x + \mathcal{O}(x^2)$$
to conclude that, for $r \rightarrow 0^+$,
\begin{align*}
2\pi t \ei\left(-\frac{r^2}{4t}\right) + 4\pi\exp\left(-\frac{r^2}{4t} \right) t \log{\left(\frac{1}{r}\right)} &= 2\pi t \log\left(\frac{r^2}{4t} \right) + 2\pi t \gamma \\
&+ 4\pi t \log{\left(\frac{1}{r}\right)} + \mathcal{O}(r^2)\\
&= 2 \pi t \gamma - 2\pi t \log{(4t)}  + \mathcal{O}(r^2).
\end{align*}
Altogether
\begin{align*}
 \int_{\mathbb{R}^2}   \log{ \left( \frac{1}{\|y\|}\right)} e^{t \Delta} \delta_{0} (y) dy &=  \int_{\mathbb{R}^2}   \log{ \left( \frac{1}{\|y\|}\right)}  \frac{1}{4 \pi t} \exp \left(-\frac{\|y\|^2}{4t}\right) dy \\
 &= \frac{1}{4\pi t} \int_{0}^{\infty} \log{\left(\frac{1}{r}\right)} \exp\left(-\frac{r^2}{4t} \right) 2 \pi r dr
 &= \frac{\gamma}{2} - \frac{\log{(4t)}}{2}.
\end{align*}
\end{proof}

\subsection{Improving the estimate for the constant.}
\begin{proof}[Proof of Corollary 1]
We will again make use of the identity
\begin{align*}
\sum_{k \neq \ell} \log \frac{1}{\|x_k - x_{\ell}\|} &= \left(\frac{1}{2} - \log{2} \right)n^2 + \left( \log{2} - \frac{1}{2}\right)n - \frac{n(n-1)t}{2}  \\
& + 2\pi \sum_{k \neq \ell}  \int_{0}^t \left( e^{s \Delta} \delta_{x_{\ell}} \right)(x_k) ds  -  2\pi n  \int_{\mathbb{S}^2} G(z,y) e^{t \Delta} \delta_{z} (y) dy \\
& +2 \pi n^2\left\| e^{(t/2) \Delta} \mu \right\|_{\dot H^{-1}}^2.
\end{align*}
We will ignore the Sobolev term completely and simply argue that
$$ 2 \pi n^2\left\| e^{(t/2) \Delta} \mu \right\|_{\dot H^{-1}}^2 \geq 0.$$
Likewise, we will ignore the interaction energy: noting that 
$ e^{t\Delta} \delta_{x} (y) \geq 0$
because solutions of the heat equation preserve positivity of the initial datum, we have
$$ 2\pi \sum_{k \neq \ell}  \int_{0}^t \left( e^{s \Delta} \delta_{x_{\ell}} \right)(x_k) ds  \geq 0.$$
Using Lemma 1, we obtain (as long as $t\rightarrow 0$ when $n \rightarrow \infty$)
\begin{align*}
-  2\pi n  \int_{\mathbb{S}^2} G(z,y) e^{t \Delta} \delta_{z} (y) dy &= n\left(\frac{1}{2} - \log{2}\right) - \frac{ n \gamma}{2} + n\frac{\log{(4t)}}{2} + o(n).
\end{align*}
Combining all these ingredients, we obtain the lower bound
\begin{align*}
\sum_{k \neq \ell} \log \frac{1}{\|x_k - x_{\ell}\|} \geq \left(\frac{1}{2} - \log{2} \right)n^2 - \frac{n(n-1)t}{2}  -   \frac{\gamma n}{2}  + n\frac{\log{(4t)}}{2} + o(n).
\end{align*}
Setting $t = 1/n$ leads to
\begin{align*}
\sum_{k \neq \ell} \log \frac{1}{\|x_k - x_{\ell}\|} &\geq \left(\frac{1}{2} - \log{2} \right)n^2 - \frac{1}{2} n \log{n}+  \frac{-\gamma + \log{(4)} -1}{2}n + o(n).
\end{align*}
We note that
$$  \frac{-\gamma + \log{(4)} -1}{2} \sim -0.0954\dots$$
which is the desired result.
\end{proof}

\subsection{How to get further improvements.} At this point one could start wondering how to improve this. Room for improvement comes from our use of the two inequalities
$$ \sum_{k \neq \ell}  \int_{0}^t \left( e^{s \Delta} \delta_{x_{\ell}} \right)(x_k) ds  \geq 0 \qquad \mbox{and} \qquad \left\| e^{(t/2) \Delta} \mu \right\|_{\dot H^{-1}}^2 \geq 0,$$
which are both clearly lossy. However, any sort of serious improvement would require to have at least some knowledge about the minimal energy configuration. We will quickly illustrate this by using the first term: suppose we knew that, at least very locally, the configuration is approximately hexagonal in the sense that the nearest neighbors form a hexagon. This would suggest that each point has $6$ other points at distance (see \S 5.1. for a derivation)
$$ \lambda = \frac{\sqrt{8\pi}}{3^{1/4} \sqrt{n}}.$$
That would then imply that the interaction between two such points $x_k, x_{\ell}$ is at scale
 $$ \left[e^{t\Delta} \delta_{x_{\ell}}\right](x_{k}) \sim \frac{1}{4 \pi t} \exp \left(-\frac{8 \pi}{ \sqrt{3} n \cdot 4 t}\right).$$
Thus we would expect, for $t = 1/n$,
\begin{align*} 2\pi \sum_{k \neq \ell}  \int_{0}^t \left( e^{s \Delta} \delta_{x_{\ell}} \right)(x_k) ds  &\geq 12 \pi n   \int_{0}^{1/n} \frac{1}{4 \pi t} \exp \left(-\frac{8 \pi}{ \sqrt{3} \cdot 4 \cdot n t} \right) dt \\
&\sim 0.017863 n
\end{align*}
which would improve the constant from $c_{\log} \geq -0.095$ to the slightly better one $c_{\log} \geq -0.077$. Incorporating more and more points and picking $t = c/n$ for $c \gg 1$ to shift the importance from the $\dot H^{-1}-$term (which is tricky and global) to the local interaction energy eventually leads to the conjectured results (see \S 5). One could wonder how far one can get by knowing little: some improvement is possible but it is not large. We quickly sketch an argument.

\begin{proposition}
Let $\left\{x_1, \dots, x_n\right\} \subset \mathbb{S}^2$ and let $s > 0$ be arbitrary. Then, as $n \rightarrow \infty$,
$$ \# \left\{ x_i \big| \quad  \exists x_j \neq x_i  ~\emph{such that}~ \|x_i - x_j\| \leq \frac{s}{\sqrt{n}} \right\} \geq \left(1 - \frac{4}{s^2} \frac{2\pi}{\sqrt{3}}\right)n + o(n).$$
\end{proposition}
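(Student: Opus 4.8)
The plan is to bound from above the number of \emph{isolated} points. Put
$$ I = \left\{ x_i \ : \ \|x_i - x_j\| > \frac{s}{\sqrt{n}} \ \text{ for every } j \neq i \right\},$$
so that the quantity in the statement equals $n - |I|$, and it suffices to prove $|I| \leq \frac{4}{s^2}\frac{2\pi}{\sqrt{3}}\, n + o(n)$.

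The point is that $I$ is a well-separated set: any two distinct points of $I$ lie at Euclidean distance $> s/\sqrt{n}$. Consequently the open spherical caps $C_i = \{ y \in \mathbb{S}^2 : \|y - x_i\| < s/(2\sqrt{n})\}$ with $x_i \in I$ are pairwise disjoint, by the triangle inequality in $\mathbb{R}^3$. On the unit sphere a cap of chordal radius $\rho$ has area exactly $\pi \rho^2$ (if $\theta$ is its angular radius then $\rho^2 = 2(1-\cos\theta)$ while the area is $2\pi(1-\cos\theta)$); hence each $C_i$ has area $\pi s^2/(4n)$, and $\{C_i : x_i \in I\}$ is a packing of $\mathbb{S}^2$ by congruent caps whose common angular radius tends to $0$ as $n \to \infty$.

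Now I invoke the classical asymptotics for best packing on the sphere: the packing density of $\mathbb{S}^2$ by congruent caps of angular radius $\delta$ is at most $\pi/\sqrt{12} + o(1) = \pi/(2\sqrt{3}) + o(1)$ as $\delta \to 0$; equivalently, the largest number of points on $\mathbb{S}^2$ at pairwise chordal distance $\geq d$ is $\frac{8\pi}{\sqrt{3}}\, d^{-2}(1 + o(1))$ as $d \to 0$. This is a theorem of Fejes T\'oth (see also Borodachov, Hardin \& Saff \cite{bor}) and ultimately rests on the optimality of the hexagonal circle packing of the plane. Applied with the vanishing parameter $s/\sqrt{n}$ it gives
$$ \frac{|I| \cdot \pi s^2 / (4n)}{4\pi} \leq \frac{\pi}{2\sqrt{3}} + o(1), \qquad \text{so} \qquad |I| \leq \frac{8\pi}{\sqrt{3}\, s^2}\, n + o(n) = \frac{4}{s^2}\frac{2\pi}{\sqrt{3}}\, n + o(n),$$
and subtracting this from $n$ proves the Proposition.

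The only real content is the sharp constant $\pi/(2\sqrt{3})$: with the trivial estimate ``disjoint caps have total area at most $4\pi$'' in place of Fejes T\'oth's bound one would only obtain $|I| \leq 16 n/s^2$, hence the weaker $(1 - 16 s^{-2})n$. Gaining the factor $\pi/(2\sqrt{3}) \approx 0.9069$ is precisely what produces the stated constant, and it is where the hexagonal geometry enters — in keeping with Problem 1. Everything else is routine: the $o(n)$ merely absorbs the $o(1)$ error in the packing density, which is legitimate since $s$ is fixed while the cap radius $s/(2\sqrt{n}) \to 0$.
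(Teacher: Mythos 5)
Your proof is correct and follows essentially the same route as the paper: bound the number of isolated points by packing disjoint caps of chordal radius $s/(2\sqrt{n})$ and invoking the hexagonal packing density $\pi/\sqrt{12}$. You are in fact slightly more careful than the paper's sketch (noting the exact cap area $\pi\rho^2$ and citing Fejes T\'oth for the density bound), but the argument is the same.
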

\begin{proof} Let us denote the number of points that do \textit{not} have this property by $X$. Then we can put a $(s/2)/\sqrt{n}$ ball around each of these points and the balls will not overlap. Since they do not overlap, their maximum density is that of a hexagonal lattice which is $\pi/\sqrt{12}$. Their area is approximately given by the area of the Euclidean counterpart since $n \rightarrow \infty$. Altogether, this means that
$$ X\cdot \frac{s^2 \pi}{ 4n} \leq \frac{\pi}{\sqrt{12}} \cdot 4 \pi.$$
Rearranging gives the desired lower bound on $n-X$.
\end{proof}
We can use this to estimate, as $n\rightarrow \infty$, 
\begin{align*} 2\pi \sum_{k \neq \ell}  \int_{0}^t \left( e^{s \Delta} \delta_{x_{\ell}} \right)(x_k) ds  &\geq  2 \pi  \left(1 - \frac{1}{s^2} \frac{8\pi}{\sqrt{3}}\right) n   \int_{0}^{1/n} \frac{1}{4 \pi t} \exp \left(-\frac{s^2}{ 4 n t} \right) dt.
\end{align*}
This is maximal for $s \sim 4.16$ where it contributes at total of $\sim 0.0002n$ showing the need for more structured information about the minimizing configuration.

\subsection{The `Meta-Theorem'.} At this point, we can explain the idea behind the Meta-Theorem. The arguments in the preceding section explained how one could get slightly more information out of the interaction quantity
$$\sum_{k \neq \ell}  \int_{0}^t \left( e^{s \Delta} \delta_{x_{\ell}} \right)(x_k) ds.$$
Nonetheless, this by itself can never lead to a complete result without analyzing the size of the second term that was dismissed,
$$ 2 \pi \left\| e^{(t/2) \Delta} \sum_{k=1}^{n} \delta_{x_k} \right\|_{\dot H^{-1}}^2 \geq 0.$$
We will now analyze this object. We recall that 
 $$ \left[e^{t\Delta} \delta_{x_k}\right](x) \sim \frac{1}{4 \pi t} \exp \left(-\frac{\|x - x_k\|^2}{4t}\right)$$
and that we are working, approximately, at scale $t = c/n$ for some constant $c>0$.  This means that each one of these terms is essentially a Gaussian at scale
$\sim \sqrt{c/n}$. Without any information about the distribution of the points, it is perhaps not terribly clear what to do with this information. However, if the points are locally arranged in a fairly regular manner, say a hexagonal pattern, then something very nice starts to happen: the Gaussians start to cancel each other out and the arising function will start to behave in a fairly regular manner.

\begin{center}
\begin{figure}[h!]
\begin{tikzpicture}[scale=0.4]
  \foreach \x in {-7,...,7}
    \foreach \y in {-7,...,7}
      {
        \filldraw (\x + 0.5*\y,0.866*\y) circle (0.06cm);
      }
       \draw[ultra thick] (0.3, 0.1) circle (0.5cm);
      \draw[very thick] (0.3, 0.1) circle (1cm);
            \draw[very thick] (0.3, 0.1) circle (1.5cm);
      \draw[thick] (0.3, 0.1) circle (2cm);
            \draw[thick] (0.3, 0.1) circle (2.5cm);
      \draw (0.3, 0.1) circle (3cm);
            \draw (0.3, 0.1) circle (3.5cm);
     \draw[dashed] (0.3, 0.1) circle (4cm);
     \draw[dashed] (0.3, 0.1) circle (4.5cm);
\end{tikzpicture}
\caption{Averaging a Gaussian over a Hexagonal Lattice.}
\label{fig:ave}
\end{figure}
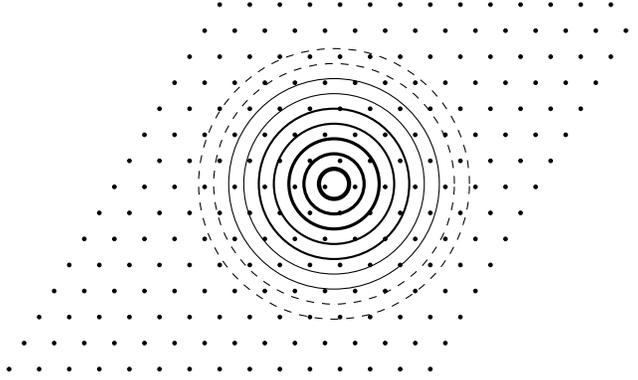
\end{center}

More precisely, see Fig. \ref{fig:ave}, we can interpret the function as averaging over the lattice with a Gaussian weight: the arising function will be close to constant with small fluctuations (the wider the Gaussian, the smaller the fluctuations). Moreover, we see that this function fluctuates at length scale $\sim n^{-1}$ and thus we expect that
$$  \left\| e^{(t/2) \Delta} \sum_{k=1}^{n} \delta_{x_k} \right\|^2_{\dot H^{-1}} \lesssim \frac{1}{n}  \left\| e^{(t/2) \Delta} \sum_{k=1}^{n} \delta_{x_k} \right\|^2_{L^2}.$$
However, we can expand the square and note that self-interactions are small compared to off-diagonal interactions for $c$ large, i.e.
$$ \frac{1}{n}\left\| e^{(t/2) \Delta} \sum_{k=1}^{n} \delta_{x_k} \right\|^2_{L^2} \sim  \frac{1}{n}\sum_{k \neq \ell} \left( e^{t \Delta} \delta_{x_{\ell}} \right)(x_k).$$
However, since $t = c/n$ and $c \gg 1$, we have
$$ \frac{1}{n}\sum_{k \neq \ell} \left( e^{t \Delta} \delta_{x_{\ell}} \right)(x_k) \ll  \int_0^t \sum_{k \neq \ell} \left( e^{s \Delta} \delta_{x_{\ell}} \right)(x_k) ds.$$
This is the desired Meta-Theorem.  We emphasize that this argument does not require the points to be exactly arranged like a hexagonal lattice, it suffices if they
are approximately arranged in a regular shape. Naturally, the stronger the information that one has about the geometry of the minimizing configuration, the easier it is to make this part of the argument rigorous.

\section{A Formula for {\Large $c_{\log}$}: Corollary 2 and 3}
In this section, we will aim to use our approach to identify the correct linear constant assuming that the minimizing configuration 
behaves locally like the hexagonal lattice. After that, we will combine various results to establish Corollary 3.

\subsection{Proof of Corollary 2.} We have to understand the behavior of the expression
$$  - \frac{n(n-1)t}{2}  + 2\pi \sum_{k \neq \ell}  \int_{0}^t \left( e^{s \Delta} \delta_{x_{\ell}} \right)(x_k) ds \qquad \mbox{for}~t = \frac{c}{n}$$
and $c$ being arbitrarily large (though we will assume it to be quite small compared to $n$). Since $t \ll 1$, the heat kernel will be localized to some $\sim c^{1/2} n^{-1/2}$ neighborhood
around a point and we can approximate this by assuming that everything happens in $\mathbb{R}^2$ and that the local configuration of points behaves like the
hexagonal lattice.
In $\mathbb{R}^2$, we have an explicit expression for the heat kernel
$$ \left[e^{s\Delta} \delta_{x_k}\right](x) = \frac{1}{4 \pi s} \exp \left(-\frac{\|x - x_k\|^2}{4s}\right).$$
In particular, this allows us to incorporate the self-interaction for any $s>0$ via
$$ \sum_{k \neq \ell}  \left( e^{s \Delta} \delta_{x_{\ell}} \right)(x_k)  = - \frac{n}{4\pi s} + \sum_{k, \ell}   \left( e^{s \Delta} \delta_{x_{\ell}} \right)(x_k).$$

\begin{center}
\begin{figure}[h!]
\begin{tikzpicture}[scale=2]
\filldraw (0,0) circle (0.04cm);
\filldraw (1,0) circle (0.04cm);
\filldraw (0.5,0.866) circle (0.04cm);
\draw [thick] (0,0) -- (1,0) -- (0.5, 0.866) -- (0,0);
\filldraw (-1,0) circle (0.04cm);
\filldraw (-0.5,0.866) circle (0.04cm);
\filldraw (1,0) circle (0.04cm);
\filldraw (-0.5,-0.866) circle (0.04cm);
\filldraw (0.5,-0.866) circle (0.04cm);
\draw [thick] (0.5,0.866) -- (-0.5,0.866) -- (0,0) -- (-1,0) -- (-0.5, -0.866) -- (0.5, -0.866) -- (1,0);
\draw [thick] (-0.5, 0.866) -- (-1,0);
\draw [thick] (-0.5, -0.866) -- (0,0) -- (0.5, -0.866);
\node at (0.5, 0.1) {$\lambda$};
\end{tikzpicture}
\caption{Local scaling of the hexagonal lattice.}
\label{fig:hex}
\end{figure}
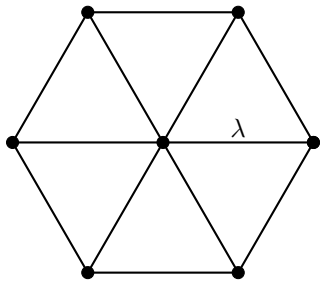
\end{center}

In the next step, we determine the local density of the hexagonal lattice: each lattice point is part of six triangles while each triangle is comprised of three points. Thus there are $2n$ triangles in total covering an area of $|\mathbb{S}^2|  = 4\pi$. Invoking the area of an equilateral triangle (see Fig. \ref{fig:hex}), we have
$$ \frac{\sqrt{3}}{4} \lambda^2 = \frac{4\pi}{2n} \qquad \mbox{and thus} \qquad \lambda = \frac{\sqrt{8\pi}}{3^{1/4} \sqrt{n}}.$$
Let us use $\Lambda$ to denote the standard hexagonal lattice in $\mathbb{R}^2$ generated by the vectors
$$ v_1 = (1,0) \qquad \mbox{and} \qquad v_2 = \left(\frac{1}{2}, \frac{\sqrt{3}}{2}\right).$$
It remains to understand the behavior of
$$ \frac{1}{4\pi s}\sum_{k_1, k_2 \in \mathbb{Z}} \exp \left( - \frac{\lambda^2}{4s} \left\| k_1 v_1 + k_2 v_2 \right\|^2\right) =  \frac{1}{4\pi s}\sum_{k_1, k_2 \in \mathbb{Z}} \exp \left( - \lambda^2\frac{k_1^2 + k_1 k_2 + k_2^2}{4s} \right) .$$
Introducing the real number $0 < q < 1$ by 
$$ q = \exp\left(-\frac{\lambda^2}{4s}\right),$$
we can write this expression as
$$  \frac{1}{4\pi s} \sum_{k_1, k_2 \in \mathbb{Z}} \exp \left( - \lambda^2\frac{k_1^2 + k_1 k_2 + k_2^2}{4s} \right) =  \frac{1}{4\pi s} \sum_{k_1, k_2 \in \mathbb{Z}} q^{k_1^2 + k_1 k_2 + k_2^2}.$$
However, this sum is the cubic analogue of Jacobi theta functions (see e.g. Borwein \& Borwein \cite{borwein} and Faulhuber \cite{markus}). We introduce
$$ L(q) =  \sum_{k_1, k_2 \in \mathbb{Z}} q^{k_1^2 + k_1 k_2 + k_2^2}.$$
Thus
$$   \frac{1}{4\pi s} \sum_{k_1, k_2 \in \mathbb{Z}} q^{k_1^2 + k_1 k_2 + k_2^2} = \frac{L(q)}{4\pi s}.$$
Summarizing, for all $s>0$, in the limit as local structures converge to the hexagonal lattice within a $\sqrt{c}/\sqrt{n}-$window around most points,
\begin{align*}
 \sum_{k \neq \ell}  \left( e^{s \Delta} \delta_{x_{\ell}} \right)(x_k)  &= - \frac{n}{4\pi s} + \sum_{k, \ell}   \left( e^{s \Delta} \delta_{x_{\ell}} \right)(x_k) \\
 &= - \frac{n}{4\pi s} + \frac{n}{4\pi s}\cdot L\left(\exp\left(-\frac{\lambda^2}{4s}\right) \right)\\
 &=  - \frac{n}{4\pi s} + \frac{n}{4\pi s} \cdot L\left(\exp\left(-\frac{2\pi}{\sqrt{3} n s}\right) \right)\\
 &=    \frac{n}{4\pi s} \left[ L\left(\exp\left(-\frac{2\pi}{\sqrt{3} n s}\right) \right) - 1\right].
 \end{align*}
Setting $t = c/n$, we have (ignoring terms smaller than $\sim n$)
\begin{align*}
 - \frac{n(n-1)t}{2}  + 2\pi \sum_{k \neq \ell}  \int_{0}^{c/n} \left( e^{s \Delta} \delta_{x_{\ell}} \right)(x_k) ds &=  - \frac{n c}{2}+ o(n) \\
 &+ n\int_{0}^{c/n}  \frac{1}{2s} \left(  L\left(\exp\left(-\frac{2\pi}{\sqrt{3} n s}\right) \right) - 1 \right)ds.
\end{align*}
Changing variables leads to
$$n\int_{0}^{c/n}  \frac{1}{2s} \left(  L\left(\exp\left(-\frac{2\pi}{\sqrt{3} n s}\right) \right) - 1 \right)ds = n\int_{0}^{c}  \frac{1}{2s} \left(  L\left(\exp\left(-\frac{2\pi}{\sqrt{3} s}\right) \right) - 1 \right)ds.$$
We recall from Lemma 1 that
\begin{align*}
 -  2\pi n  \int_{\mathbb{S}^2} G(z,y) e^{t \Delta} \delta_{z} (y) dy &= n\left(\frac{1}{2} - \log{2}\right) - \frac{\gamma}{2} n - \frac{n}{2} \log{n} + \frac{n}{2} \log{(4c)} + o(n).
\end{align*}
Collecting all these things, we have established that for any $c > 0$ and for configurations of points that behave locally like the hexagonal lattices in most parts of $\mathbb{S}^2$ with a small underlying error
\begin{align*}
\sum_{k \neq \ell} \log \frac{1}{\|x_k - x_{\ell}\|} &= \left(\frac{1}{2} - \log{2} \right)n^2 - \frac{n}{2} \log{n} - \frac{n c}{2}\\
 &+ n\int_{0}^{c}  \frac{1}{2s} \left(  L\left(\exp\left(-\frac{2\pi}{\sqrt{3} s}\right) \right) - 1 \right)ds\\
& - \frac{\gamma}{2} n + \frac{n}{2} \log{(4c)}  +2 \pi n^2\left\| e^{(c/2n) \Delta} \mu \right\|_{\dot H^{-1}}^2 + o(n).
\end{align*}
We see, following the discussion in \S 4.4, that for any fixed $c>0$, the quantity $2 \pi n^2\left\| e^{(c/2n) \Delta} \mu \right\|_{\dot H^{-1}}^2$ will scale
linearly in $n$ but with a constant that goes to 0 as $c \rightarrow \infty$ (and somewhat rapidly as well). Therefore, letting $c \rightarrow \infty$, we see
that the linear term is given by
$$ c_{1} = \lim_{c \rightarrow \infty} - \frac{c}{2} +\int_{0}^{c}  \frac{1}{2s} \left(  L\left(e^{-\frac{2\pi}{\sqrt{3} s}} \right) - 1 \right)ds - \frac{\gamma}{2} + \frac{\log{(4c)}}{2}.$$
The constant $c_{\tiny \mbox{BHS}}$ \cite{brauchart} has been obtained as the leading order term under the assumption of a locally hexagonal lattice and since our computation
is based on exactly the same assumption, we have $c_{1} = c_{\tiny \mbox{BHS}}$.
If it could be shown that the minimizing configuration is indeed locally hexagonal (see B\'etermin \& Sandier \cite{betermin} and Sandier \& Serfaty \cite{sandier}), this would tell us that $c_{\log} = c_{\tiny \mbox{BHS}}$ and thus $c_1 = c_{\log}$.

\subsection{Proof of Corollary 3} Returning to the integral representation for $c_{\log}$ derived above, we remark that we can write
$$ L\left(\exp\left(-\frac{2\pi}{\sqrt{3} s}\right) \right) - 1 = \sum_{(k_1, k_2) \in \mathbb{Z}^2 \atop (k_1, k_2) \neq (0,0)} \exp\left( -\frac{2\pi}{\sqrt{3} s} (k_1^2+k_1 k_2 + k_2^2)\right).$$
Observe that integrating a single such term leads to the incomplete gamma function 
$$ \int_{0}^{c} \frac{1}{2s}\exp\left( - \frac{\alpha}{s} \right) = \frac{1}{2} \Gamma\left(0, \frac{\alpha}{c} \right).$$
Therefore
$$ \int_{0}^{c}  \frac{1}{2s} \left(  L\left(\exp\left(-\frac{2\pi}{\sqrt{3} s}\right) \right) - 1 \right)ds = \frac{1}{2} \sum_{  (k_1, k_2) \in \mathbb{Z}^2 \atop (k_1, k_2) \neq (0,0)}  \Gamma\left(0,  \frac{2\pi}{\sqrt{3}c} (k_1^2 + k_1 k_2 + k_2^2) \right).$$
Using 
$$ c_{\log} = \lim_{c \rightarrow \infty} - \frac{c}{2} +\int_{0}^{c}  \frac{1}{2s} \left(  L\left(e^{-\frac{2\pi}{\sqrt{3} s}} \right) - 1 \right)ds - \frac{\gamma}{2} + \frac{\log{(4c)}}{2}$$
as well as (this assumption is satisfied since we do only work with the hexagonal lattice here)
$$  c_{\log} =  2\log{2} + \frac{1}{2} \log\frac{2}{3} + 3 \log \frac{\sqrt{\pi}}{\Gamma(1/3)},$$
we arrive at, for $c \rightarrow \infty$,
\begin{align*}  \sum_{  (k_1, k_2) \in \mathbb{Z}^2 \atop (k_1, k_2) \neq (0,0)}  \Gamma\left(0,  \frac{2\pi}{\sqrt{3}c} (k_1^2 + k_1 k_2 + k_2^2) \right) &= c - \log{(4c)} + \gamma + 4\log{2} \\&+ \log{\frac{2}{3}} + 6 \log{\frac{\sqrt{\pi}}{\Gamma(1/3)}} + o(1).
\end{align*}
A change of variables, $\varepsilon = 2\pi/(\sqrt{3} c)$, results in the desired statement.\\

\textbf{Acknowledgment.} The author is grateful to Laurent B\'etermin and Carlos Beltran for helpful discussions.

\end{document}